\newcommand{\NN}{\mathbb{N}}
\newcommand{\RR}{\mathbb{R}}
\newtheorem{theorem}{Theorem}
\newtheorem{lemma}{Lemma}
\newtheorem{assumption}{Assumption}
\newtheorem{defi}{Definition}
\newtheorem{proposition}{Proposition}
\newtheorem{proc}{Procedure}
\newtheorem{problem}{Problem}
\newenvironment{proof}{\noindent {\em Proof.}}{\hfill \hspace*{1pt} \hfill $\square$}
\newtheorem{remm}{Remark}
\newenvironment{remark}{\begin{remm}\rm }{\hfill \hspace*{1pt} \hfill
$\circ$\end{remm}}
\newtheorem{exx}{Example}
\newcommand\diag{{\rm diag}}
\newcommand\real{\ensuremath{{\mathbb R}}}
\newcommand\complex{\ensuremath{{\mathbb C}}}
\newcommand\trace{{\rm Tr}}
\newcommand{\smallmat}[1]{\left[ \begin{smallmatrix}#1
  \end{smallmatrix} \right]}
\newcommand\mymatrix[2]{\left[\begin{array}{#1} #2 \end{array}\right]}
\title{\bf Design of Marx generators as a structured eigenvalue assignment}
\begin{document}

\author{Sergio Galeani$^1$, Didier Henrion$^{2,3}$, Alain Jacquemard$^{4,5}$, Luca Zaccarian$^{2,6}$}

\footnotetext[1]{DICII, Universit\`a di Roma Tor Vergata, Italy.}
\footnotetext[2]{CNRS, LAAS, University of Toulouse, France.}
\footnotetext[3]{Faculty of Electrical Engineering, Czech Technical University in Prague,
Czech Republic.}
\footnotetext[4]{CNRS, IMB, Universit\'e de Bourgogne, Dijon, France.}
\footnotetext[5]{Wolfgang Pauli Institut, Vienna, Austria.}
\footnotetext[6]{Dipartimento di Ingegneria Industriale, University of Trento, Italy}

\maketitle

\begin{abstract}
We consider the design problem for a Marx generator electrical
network, a pulsed power generator.
The engineering specification of the design is that a suitable
resonance condition is satisfied by the circuit so that the energy
initially stored in a number of storage capacitors is transferred in
finite time to a single load capacitor which can then store the total
energy and deliver the pulse.
 We show that the components design can be
conveniently cast as a structured real eigenvalue assignment
with significantly lower dimension than the state size of the Marx
circuit. Then we comment on the nontrivial nature of this structured
real eigenvalue assignment problem and present two possible approaches
to determine its solutions. A first symbolic approach consists in the use of
Gr\"obner basis representations, which allows us to compute all the
(finitely many) solutions. A second approach is based on
convexification of a nonconvex optimization problem with polynomial
constraints. We show that the symbolic method easily provides
solutions for networks up to six stages while the numerical method can
reach up to seven and eight stages. We also comment on the conjecture
that for any number of stages the problem has finitely many solutions,
which is a necessary assumption for the proposed methods to
converge. We regard the proof of this conjecture as an interesting
challenge of general interest in the real algebraic geometry field.
\end{abstract}

\section{Introduction}
\label{sec:intro}

Electrical pulsed power generators have been studied from the 1920s
with the goal to provide high power electrical pulses by way of
suitable electrical schemes that are slowly charged and then,
typically by the action of switches, are rapidly discharged to provide
a high voltage impulse or a spark (see, e.g., \cite{Bluhm06}).
Marx generators (see \cite[\S 3.2]{Bluhm06} or \cite{Carey02} for an
overview) were originally described by E. Marx in 1924 and correspond
to circuits enabling generation of high voltage from lower voltage
sources.
While many schemes have been proposed over the years for Marx
generators, a recent understanding of certain compact Marx generators
structures \cite{BuchenauerTPS10} reveals
that their essential behavior can be well described by a suitable $LC$
ladder network where certain components should be designed in order to
guarantee a suitable resonance condition. In turn, such a resonance
condition is known to lead to a desirable energy transfer throughout
the circuit and effective voltage multiplication which can then be
used for pulsed power generation.

\begin{figure}[ht!]
\begin{center}
\includegraphics[width=\columnwidth]{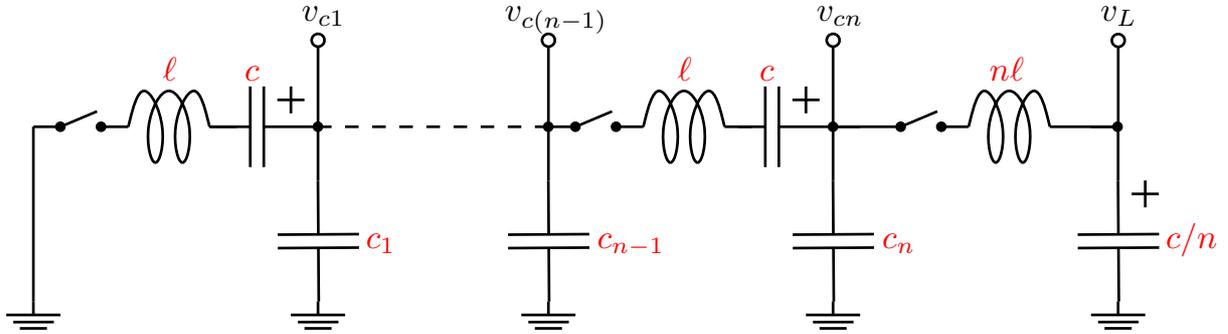}
\end{center}
\caption{The passive circuit used as a Marx generator.}
\label{fig:circuit}
\end{figure}

{\color{black}
This paper addresses the mathematical problem of designing the lumped
components of the compact Marx generator circuit well described in 
\cite{BuchenauerTPS10} and represented in Figure~\ref{fig:circuit}.
In particular, in \cite{BuchenauerTPS10},
based on a Laplace domain representation of the ladder network of Figure~\ref{fig:circuit},
 several experimentally
oriented discussions are provided illustrating that, as long as the
vertical (parasitic) capacitors of the network are
suitably selected to guarantee a certain resonance property, 
the network performs very desirably in terms of pulsed power
generation. Despite its fundamental importance from the experimental
viewpoint,  \cite{BuchenauerTPS10} does not provide a viable technique
for ensuring this resonance property and uses heuristics to find some
solutions to the design problem for some fixed number of
stages. A similar approach has been taken in the Master thesis
\cite{Antoun06}, where a state-space description of the same circuit
has been presented and a nonlinear least squares approach has been
proposed to ensure the desirable resonance property. 
Both the works outlined above have followed heuristic approaches
for the computation of the circuit components ensuring
resonance. Conversely, in this paper we introduce a substantially
different approach to the design problem by showing, via suitable 
transformations, that this design
can be cast as a 
%
structured real eigenvalue
 assignment problem for a linear system associated with the Marx
 network of Figure~\ref{fig:circuit} and only depending on the number of its stages.
}

The problem of static output feedback pole (or eigenvalue) assignment for linear systems has been
largely studied in the 1990s, see \cite{RosWil99} for a survey.
In its simplest form, it can be stated
as follows: given matrices $A \in {\mathbb R}^{n\times n}$, $B \in {\mathbb R}^{n\times m}$,
$C \in {\mathbb R}^{p\times n}$ and a monic polynomial $q(s) \in {\mathbb R}[s]$
of degree $n$, find an $m\times p$ real matrix $F$ such that $\det(sI_n-A-BFC)=q(s)$
where $I_n$ denotes the identity matrix of size $n$.
This problem has generically a solution if $mp>n$, and it has generically no solution
if $mp<n$.
 The situation $mp=n$ is much more subtle.
For this situation, it was proved in \cite{EreGab02a,EreGab02b}
that the pole placement map from the feedback matrix $F$ to the characteristic polynomial $q(s)$ is generically
not surjective. It means that there is a non-empty open subset of real matrices $A$,$B$,$C$
for which there exist open sets of pole configurations symmetric w.r.t. the real axis
which cannot be assigned by any real feedback. 
In this paper, we do not consider static output feedback
pole assignment in the form just described, but in a
structured form. The number of degrees of freedom
(number of entries in the feedback matrix) is
equal to $n$, the number of poles to be assigned,
so it bears similarity with the difficult case
$mp=n$ of static output feedback pole assignment
described above.

%
Within the context of the above cited works \cite{RosWil99,EreGab02a,EreGab02b},
the structured pole assignment problem
that we characterize in this paper corresponds to the case where $C=I_n$ and $F$ is diagonal,
which is a case not covered by these works.
For this problem, we provide in this paper a few general results characterizing its
solutions, and then show how one can tackle the solution. 
For
computationally tractable cases,
a first
technique, based on Gr\"obner basis representations, 
allows determining the full set of solutions, or in other words all the
possible selections of the circuit components leading to the desired
energy transfer.
A second numerical technique, based on nonconvex polynomial
optimization tools, allows determining one solution which is optimal
in the sense that it minimizes a certain polynomial cost function.
In the paper we state assumptions requiring that the solution set is
nonempty (that is, there exists at least one choice of the parameters
leading to the desired resonance condition) and that the number of
solutions is actually finite, namely the set of all solutions is a
zero-dimensional set. These assumptions imply desirable
termination and convergence properties of the symbolic and numerical
techniques which are highlighted in the paper. Interestingly,
numerical evidence reveals that these assumptions are satisfied for
all the numerically tractable cases that have been considered, however
their general validity is an interesting open problem.
Additional work related to the techniques reported here corresponds to
 \cite{Chi05} 
and references therein, where algebraic techniques (elimination
theory using resultants) are used in the design of power electronic devices.
In this reference the authors show that parasitic higher order harmonics in
a multilevel converter can be removed by solving a structured polynomial
system of equations featuring a high degree of symmetry.

{\color{black}
Preliminary results along the direction of this paper were presented
in \cite{ZackPPC09}, where Laplace domain descriptions were used and
where a result roughly 
 corresponding to one of the key components of our proof (that is,
Lemma~\ref{lem:Wpoles} in Section~\ref{sec:reson})
 was stated without any proof.
 Here, in addition
to proving that result, we provide a more complete statement
establishing sufficient conditions for the desired pulsed power generation.
Moreover, we
adopt a state-space representation that allows to provide a very
elegant and compact proof of our main result. 
Finally, an important
contribution of this paper consists in the two solution methods
outlined above. The symbolic one allows us to enumerate all the possible components
selections ensuring the resonant conditions for circuits with up to
$n=6$ stages. The numerical one allows us to compute the so-called
``regular solution'' (characterized in Section~\ref{sec:alg_sens}) for
the more convoluted cases $n=7$ and $n=8$. 
{\color{black}
While typical experimental realizations of the Marx generator of
Figure~\ref{fig:circuit} does not involve a larger number of stages,
the problem of solving the presented structured eigevnalue problem for
larger values of $n$ constitutes an interesting benchmark problem for 
researchers in the algebraic geometric field.
}

The contributions of this paper consist in the
following three points: A) establishing
sufficient conditions for the energy transfer goal behind the
architecture of Figure~\ref{fig:circuit} (the problem statement and
main results are presented in Section~\ref{sec:design} and the proofs
are reported in Section~\ref{sec:proofs}). B) illustrating two methods
to solve those conditions: a symbolic one, treated in
Section~\ref{sec:symbolic}, and a
numerical one, treated in Section~\ref{sec:numerical}. 
Both methods are of interest for the addressed experimental
problem due to suitable trade-offs between complexity and achievable
results;
C) illustration of the 
 potential behind the adopted Gr\"obner basis representation and the
 adopted numerical optimization of convex relaxations by way of suitable
 ``engineering-style'' propositions establishing the applicability
 limits of thsoe two approaches.
Finally, some discussions about the above mentioned conjectures and the
relevance of this study as a challenge within the algebraic geometric
field are given in
Section~\ref{sec:conclusions}, while Appendix~\ref{sec:maple} contains some illustrative
smaple Maple code implementing our algorithms.
}


{\bf Notation}: Given a square matrix $A$, $\sigma(A)$ denotes its
spectrum, i.e., the set of its complex eigenvalues.
Given a vector $f \in \real^n$, ${\mathbb Q}[f]$ denotes the set of
all polynomials with rational coefficients in the indeterminates $f$. 

\section{Marx generator design}
\label{sec:design}

\subsection{Circuit description and problem statement}

We consider the Marx generator network shown in
Figure~\ref{fig:circuit} consisting in $n$ stages (and $n+1$ loops) where,
disregarding the two rightmost components of the figure,
each one of the
$n$ stages
consists in 1) an upper branch with a capacitor and an
inductor and 2) a vertical branch with a capacitor only. Following
\cite{Antoun06,ZackPPC09,BuchenauerTPS10}, we assume that all the
capacitors and
inductors appearing in the upper branches are the same (corresponding
to some fixed positive reals $c$ and
$\ell$). We will call these capacitors ``storage capacitors'' in the
sequel, for reasons that will become clear next.
The design problem addressed here is the selection of the vertical
capacitors, which are exactly $n$, where $n$ is the number of stages
of the Marx circuit. We will call these capacitors ``parasitic
capacitors'' due to their position resembling that of parasitic
capacitors in transmission lines. Despite their name, the parasitic
capacitors $c_i$, $i=1,\ldots, n$ are not
necessarily arising from any parasitic effects and their values will
be selected in such a way to ensure a suitable resonance condition as
clarified next.

Following \cite{Antoun06,ZackPPC09,BuchenauerTPS10}, the inductance
and capacitor appearing in the rightmost loop take the values $n \ell$ and
$c/n$, respectively. We call this capacitor the ``load capacitor''.
This selection preserves the resonance property
(so that the product of any adjacent capacitor/inductor pairs is
always $\ell c$) in addition to ensuring that the load capacitor is
$n$ times larger than each one of the storage capacitors. The
problem addressed in this paper (resembling that one tackled in
\cite{Antoun06,ZackPPC09,BuchenauerTPS10}) is the following.

\begin{problem}
\label{prob_stat}
Consider the circuit in Figure~\ref{fig:circuit} for given $n$ and
certain values of $c$ and $\ell$. Select positive values $c_i>0$,
$i=1,\ldots n$ of the
parasitic capacitors and a time $T>0$ such that, initializing at $t=0$ all the storage
capacitors with the same voltage $v(0)=v_{\circ}$ and starting from zero current
in all the inductors
and zero voltage across the parasitic capacitors and the load
capacitor, the
circuit response is such that at $t = T$ all voltages and currents are zero
except for the voltage across the load capacitor.
\end{problem}

\begin{remark}
The resonance condition required in Problem~\ref{prob_stat}
is a key feature to allow the use of the circuit of
Figure~\ref{fig:circuit} for pulsed power generation. In particular,
assuming that a number of switches are used to open the loops of the
circuit, the $n$ storage capacitors can be charged at the same voltage
$v_{\circ}$. Due to the property in  Problem~\ref{prob_stat}, at time
$T$,
 all the energy initially stored in the storage capacitors will be
 concentrated in the load capacitor. In particular,
since this is a lossless circuit,
 we will have
$$
\sum_{i=1}^n \frac{1}{2} c v_{\circ}^2 = \frac{1}{2} \frac{c}{n} v_L^2 (T),
$$
which clearly implies $v_L(T) = n v_{\circ}$, namely both the
voltage and
energy transferred to the load capacitor is $n$ times larger than the
voltage and energy initially stored in each one of the storage capacitors.
\end{remark}

\subsection{Solution via structured eigenvalue assignment}

In this section we show that a solution 
 to Problem~\ref{prob_stat} can be determined from the solution of a suitable
 structured eigenvalue assignment problem involving a matrix
$B\in \real^{n \times n}$ defined as
\begin{align}
\label{eq:matrix_B}
B:=
\smallmat{
		2 & -1 & 0 & \cdots & 0\\
		-1 & \ddots & \ddots & \ddots & \vdots\\
		0 & \ddots & \ddots & \ddots & 0\\
		\vdots & \ddots & \ddots & 2 & -1 \\
		0 & \cdots & 0 & -1 &  \frac{n + 1}{n}
}
\end{align}
and an arbitrary set of even harmonics of the fundamental frequency
$\omega_0 = \sqrt{(\ell c)^{-1}}$ to be assigned to the circuit.
The following is the main result of this paper, whose proof is given
in Section~\ref{sec:proofs}.

\begin{theorem}
\label{th:design}
Consider any set of $n$ distinct positive even integers $\alpha = (\alpha_1,
\ldots, \alpha_n)$,
the matrix $B$ in (\ref{eq:matrix_B}) and any positive definite real
diagonal solution $F = \diag(f_1,\ldots,f_n)$
to the structured eigenvalue assignment problem
\begin{align}
\label{eq:assignBF}
\sigma (BF) = \{ \alpha_1^2-1, \ldots, \alpha_n^2-1\}.
\end{align}
Then for any value of $c$, the selection $c_i = c/f_i$, $i=1, \ldots,
n$, solves Problem~\ref{prob_stat} for all values of $\ell$ with
$T=\frac{\pi}{\sqrt{\ell c}}$.
\end{theorem}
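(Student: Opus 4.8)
The plan is to write down an explicit state-space model of the circuit of Figure~\ref{fig:circuit} and to show that, under the stated resonance hypotheses, the dynamics decouple into $n$ scalar harmonic oscillators whose frequencies are exactly the even harmonics $\alpha_i \omega_0$, so that after a half period $T = \pi/\omega_0 = \pi/\sqrt{\ell c}$ every mode has returned to a rest configuration, with all the energy residing in the load capacitor. First I would choose state variables: the $n$ inductor currents (including the rightmost inductor $n\ell$) and the $n+1$ capacitor voltages (the $n$ storage capacitors of value $c$, together with — after an appropriate reduction using Kirchhoff's laws and the fact that the parasitic and load capacitors are in a tree — the remaining independent voltages). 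Writing the $LC$ network equations, I expect the state matrix to take a block form in which the "stiffness" part is governed precisely by the matrix $B$ in \eqref{eq:matrix_B} (the discrete Laplacian-type structure with the $\frac{n+1}{n}$ corner entry coming from the load capacitor $c/n$ and inductor $n\ell$) multiplied by the diagonal matrix of reciprocal parasitic capacitances, i.e. by $F/c$ with $F = \diag(f_1,\dots,f_n)$, $f_i = c/c_i$.

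Next I would pass to a second-order description: eliminating the currents, the vector of relevant voltages $x$ satisfies $\ell c\,\ddot x = -B F\, x$ up to the natural scaling, so the squared natural frequencies of the circuit are $\omega_0^2$ times the eigenvalues of $BF$, where $\omega_0^2 = (\ell c)^{-1}$. Here is where the eigenvalue-assignment hypothesis \eqref{eq:assignBF} enters: if $\sigma(BF) = \{\alpha_1^2 - 1,\dots,\alpha_n^2 - 1\}$ then the full circuit has $n$ modes at frequencies $\omega_0\sqrt{\alpha_i^2 - 1 + 1} = \alpha_i \omega_0$ once the always-present "fundamental" mode contributed by the series $LC$ chain is accounted for — the $-1$ shift in \eqref{eq:assignBF} is exactly the bookkeeping that turns the eigenvalues of $BF$ into the shifted squared harmonics. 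Since $F$ is positive definite and $B$ is (similar to) a positive definite matrix, all these frequencies are real, and since the $\alpha_i$ are distinct positive even integers the modes are simple and all of the form (even integer)$\times\omega_0$.

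Then I would compute the response from the prescribed initial condition (all storage capacitors at $v_\circ$, all currents zero, parasitic and load voltages zero). Because the initial velocity (currents) is zero, each modal coordinate evolves as a pure cosine $\cos(\alpha_i \omega_0 t)$; at $t = T = \pi/\omega_0$ we get $\cos(\alpha_i \pi) = 1$ for every $i$ since $\alpha_i$ is even, so every modal coordinate returns to its initial value and every modal velocity is zero. Translating back through the (invertible) change of coordinates, at time $T$ all inductor currents are zero and the voltage distribution is again "flat" — but charge conservation on the isolated capacitor sub-network, together with the fact that the storage branches each contain an inductor carrying zero current and hence (after the transfer) decouple, forces all storage-capacitor voltages and parasitic-capacitor voltages to vanish while the load capacitor absorbs the total charge; the energy balance in the Remark then gives $v_L(T) = n v_\circ \neq 0$. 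I would finish by checking positivity: $c_i = c/f_i > 0$ follows from $F \succ 0$, and $T > 0$ is immediate.

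The main obstacle I anticipate is the first step: setting up the state-space model cleanly and verifying that the stiffness matrix is exactly $BF$ (with the correct corner entry $\frac{n+1}{n}$ and the correct $-1$ shift), rather than some matrix merely similar to it. This requires carefully handling the constraint imposed by Kirchhoff's current law on the capacitor tree (which reduces the naive $2n+1$ states to a smaller independent set) and correctly incorporating the non-uniform rightmost components $n\ell$ and $c/n$. Once the model is in the form $\ddot x = -\omega_0^2 BF x$ with the stated initial data, the rest is the short modal argument above — this is presumably the "very elegant and compact proof" the introduction advertises, and I would expect Lemma~\ref{lem:Wpoles} to be doing the work of relating the circuit transfer function / characteristic polynomial to $\det(sI - BF)$.
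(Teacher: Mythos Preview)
Your modal argument has a genuine gap that makes the proof fail as written. You argue that every modal coordinate evolves as $\cos(\alpha_i\omega_0 t)$ with $\alpha_i$ even, so that at $T=\pi/\omega_0$ ``every modal coordinate returns to its initial value.'' But if every mode returns to its initial value then $x(T)=x(0)$, i.e.\ the storage capacitors are still charged and the load is still empty --- exactly the opposite of what you need. The subsequent appeal to ``charge conservation on the isolated capacitor sub-network'' cannot rescue this: nothing in a lossless linear system moves energy from one place to another if the state has genuinely returned to where it started.

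What is missing is the role of the \emph{fundamental} mode at frequency $\omega_0$ (an odd multiple), which you mention in passing but then drop from the evolution argument. The paper's Lemma~\ref{lem:Wpoles} shows that the full $(3n{+}2)$-dimensional state matrix $A_0$ has three kinds of eigenvalues: an $n$-dimensional kernel, one pair at $\pm\jmath\omega_0$, and $n$ pairs at $\pm\jmath\omega_0 a_i$ where $a_i^2$ are the eigenvalues of $I+BF$ (not $BF$ --- this is the source of your ``$-1$ shift,'' and it is not an extra mode but an additive shift of each eigenvalue). Under the assignment $\sigma(BF)=\{\alpha_i^2-1\}$ the $a_i$ become the even integers $\alpha_i$. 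The paper then writes the initial state in the diagonalizing coordinates as $\tilde x_{01}+\tilde x_{02}+\tilde x_{03}$, where $\tilde x_{01}$ lies in the kernel (constant), $\tilde x_{03}$ excites only the even-harmonic modes (returns unchanged at $T$), and $\tilde x_{02}$ excites only the $\omega_0$ mode. After half a period this last component acquires phase $\pi$ and \emph{flips sign}, so $\tilde x(T)=\tilde x_{01}-\tilde x_{02}+\tilde x_{03}$. Transforming back, the sign flip of this single component is precisely what moves the energy from the storage capacitors to the load. Your proposed reduction to a second-order system $\ddot x = -\omega_0^2 BF\,x$ in $n$ variables cannot see either the kernel or the fundamental mode, and without the latter there is no mechanism for the transfer.
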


Theorem~\ref{th:design} shows that a solution to
Problem~\ref{prob_stat} can be determined by solving an eigenvalue assignment
problem with decentralized feedback (because matrix $F$ is diagonal).
{\color{black} 
Note that this structured eigenvalue assignment problem arises
naturally from the physical nature of the circuit under consideration
and does not arise from some simplifying assumptions on the circuit behavior.
}
A generalized version of this structured pole assignment problem
was studied in \cite{Wan94}
(indeed, using the notations there, the pole assignment problem 
in (\ref{eq:assignBF}) is obtained by setting $r=n$, $C=I_n$, $m_i=p_i=1$, $i=1,\ldots,n$).
It is shown in \cite{Wan94} that generic pole assignment depends on the dimension of a product
Grassmannian, see \cite[Equ. (17)]{Wan94}. In our case it is equal to $n!$
which is always even, and from \cite[Theorem 4.2]{Wan94} it follows that
generic pole assignment cannot be achieved. A geometric condition
that ensures generic pole assignment is given in \cite[Prop. 4.2]{Wan94} but we
do not know whether this condition can be checked computationally.
In any case it is an evidence that the question of existence
of a real solution to our inverse eigenvalue problem does not appear
to be trivial.



\begin{remark}
Bearing in mind that $F$ is diagonal, real and positive definite,
the inverse eigenvalue problem (\ref{eq:assignBF}) can be equivalently
cast as a symmetric inverse Jacobi eigenvalue problem (IJEP) (see,
e.g., \cite{ChuSIAM98,ChuBook05}) by performing the coordinate change with $T =
\sqrt{F}$, which leads to
\begin{align}
\label{eq:symm}
M := T (BF) T^{-1} = \sqrt{F} B \sqrt{F},
\end{align}
where it is evident that $M= M^T >0$. In particular, positive
definiteness of $M$ arises from positive definiteness of $B$ which
is established in the proof of Lemma~\ref{lem:Wpoles}.
Since (\ref{eq:symm}) is a change of coordinates, then
assigning the spectrum $\{ \alpha_1^2-1, \ldots, \alpha_n^2-1\}$
 to the matrix $M$ is equivalent to solving
the eigenvalue assignment problem (\ref{eq:assignBF}) with the possible advantage that
$M$ is a symmetric matrix.
\end{remark}

The following result shows that any solution to (\ref{eq:assignBF}) is
physically implementable as it corresponds to positive values of the
parasitic capacitors.

\begin{lemma}
\label{lem:Fpos}
Any solution to the structured eigenvalue assignment problem
(\ref{eq:assignBF}) in Theorem~\ref{th:design} is such that $F>0$.
\end{lemma}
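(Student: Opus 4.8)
The plan is to exploit the fact, established in the proof of Lemma~\ref{lem:Wpoles} (and recalled in the remark following Theorem~\ref{th:design}), that $B$ in (\ref{eq:matrix_B}) is symmetric and positive definite. Given this, $BF$ is similar to a symmetric matrix, hence $F$ is congruent to that symmetric matrix, and the positivity of $F$ will follow from Sylvester's law of inertia.

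First I would record, or reprove in one line, that $B=B^T>0$: for every $x\in\real^n$ one has $x^TBx=\sum_{i=1}^{n-1}(x_i-x_{i+1})^2+x_1^2+\tfrac1n x_n^2$, which is strictly positive unless $x=0$. Consequently $B$ has a unique symmetric positive definite square root $B^{1/2}$, which is invertible. Next I would conjugate: $B^{-1/2}(BF)B^{1/2}=B^{1/2}FB^{1/2}=:S$, so $BF$ is similar to the symmetric matrix $S$, and therefore by (\ref{eq:assignBF}) we get $\sigma(S)=\sigma(BF)=\{\alpha_1^2-1,\ldots,\alpha_n^2-1\}$. Since the $\alpha_i$ are positive even integers we have $\alpha_i\ge 2$, hence $\alpha_i^2-1\ge 3>0$ for all $i$, so $S$ is a symmetric matrix with strictly positive spectrum, i.e. $S=S^T>0$.

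Finally, rewriting $F=B^{-1/2}SB^{-1/2}$ exhibits $F$ as congruent to $S$ via the invertible symmetric matrix $B^{-1/2}$. By Sylvester's law of inertia, $F$ and $S$ share the same numbers of positive, negative and zero eigenvalues; since all $n$ eigenvalues of $S$ are positive, the same holds for $F$, that is $F>0$. I do not expect a real obstacle here: the only substantive ingredient is the positive definiteness of $B$, which is elementary (and needed elsewhere in the paper anyway), while the remaining steps are the standard similarity/congruence bookkeeping for the pair $(B,F)$ with $B$ positive definite.
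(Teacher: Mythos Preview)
Your proof is correct, and it is considerably cleaner than the paper's. Both arguments ultimately rest on the positive definiteness of $B$ and on Sylvester's law of inertia, but they organize the algebra differently. The paper argues by contradiction: it writes $F=\bar F D$ with $\bar F>0$ diagonal and $D$ a sign matrix, conjugates $BF$ by $\sqrt{\bar F}$ to obtain $\bar M D$ with $\bar M=\sqrt{\bar F}B\sqrt{\bar F}>0$, then diagonalizes $\bar M=Q\Lambda Q^T$ and performs a further similarity by $\sqrt{\Lambda}^{-1}Q^T$ to reach a symmetric matrix $\sqrt{\Lambda}Q^TDQ\sqrt{\Lambda}$ that is simultaneously congruent to $D$ and similar to $BF$; the contradiction is that a symmetric matrix with positive spectrum cannot be congruent to a $D$ with a negative entry. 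Your route bypasses both the sign decomposition and the contradiction by conjugating with $B^{1/2}$ instead of $\sqrt{\bar F}$: one line gives $S=B^{1/2}FB^{1/2}$ similar to $BF$ and congruent to $F$, and Sylvester finishes. The gain is brevity and transparency; nothing is lost, since the only substantive input (that $B>0$) is the same in both proofs, and your explicit sum-of-squares identity for $x^TBx$ is a nice self-contained verification of that fact.
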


\begin{proof}
Given any solution to (\ref{eq:assignBF}), all the diagonal entries of $F$ are
necessarily nonzero otherwise $BF$ would be rank deficient which
contradicts the fact that $\sigma (BF)$ only has strictly positive elements.
Define $F = \bar F D$,
where $\bar F>0$ is diagonal and $D$ is a diagonal matrix whose
elements are either $1$ or $-1$. 

Assume now that the statement of the proposition is not true
so that $D$ has at least one negative entry. We consider 
the coordinate transformation $\sqrt{\bar F} BF
\sqrt{\bar F}^{-1} = \sqrt{\bar F} B\sqrt{\bar F} D =: \bar M D$, where the last
equality follows from the fact that $\bar F$ and $D$ are both
diagonal. Due to the coordinate transformation, all the eigenvalues of
$\bar M D$ are positive (because they are the same as those of $BF$).
Since $B>0$ (see the proof of Lemma~\ref{lem:Wpoles}
in Section~\ref{sec:reson}) we have that $\bar M := \sqrt{\bar F}
B\sqrt{\bar F}$ is symmetric positive definite, so that there exists
an orthogonal matrix $Q$ such that $\bar M  = Q \Lambda Q^T$, with
$\Lambda$ diagonal positive definite. Then it follows that 
$Q^T \bar MD = \Lambda Q^T D$ which, pre-multiplied by
$(\sqrt{\Lambda})^{-1}$ and post-multiplied by $Q\sqrt{\Lambda}$ leads
to
$$
\Sigma := (\sqrt{\Lambda})^{-1}Q^T \bar MD Q\sqrt{\Lambda} = \sqrt{\Lambda} Q^T DQ\sqrt{\Lambda},
$$
which establishes that the matrix $\Sigma$ is symmetric (rightmost
term) and has positive eigenvalues (middle term which is a coordinate
transformation from $BF$). Nevertheless, the matrix $D$ is not
positive definite by assumption, which leads to a contradiction.
\end{proof}

\subsection{Two equivalent eigenvalue assignment formulations}
\label{sec:twoform}

Selecting the diagonal entries
$f = [f_1 \cdots f_n]^T$ to solve
the eigenvalue assignment (\ref{eq:assignBF}) amounts to solving a finite set
of $n$ equations (polynomial in the unknown $f$ with rational coefficients), each of them corresponding to one
coefficient of the following polynomial identity in the variable $s$:
\begin{equation}
\label{eq:pol_assign}
\det(s I - BF) = \prod_{i=1}^n \Big(s- (\alpha_i^2-1)\Big), \quad \forall s \in \complex .
\end{equation}                 
As an example, for the case $n=2$,
where $BF = \smallmat{2 f_1 & -f_1 \\ -f_2 & \frac{3}{2}f_2}$,
 if one makes the simple~\footnote{A typical selection of
   $\alpha$ is $\alpha_i = 2i$, so that the circuit resonates at the
   lowest possible frequency.}
 selection 
$\alpha = (2, 4)$, then the set of polynomial equations ensuring 
(\ref{eq:pol_assign}) corresponds to:
\begin{equation}
\label{eq:ex3}
\left\{
\begin{array}{rcl}
2 f_1  f_2 &=&   (\alpha_1^2-1)(\alpha_2^2-1)= 45  \\
2 f_1+ \frac{3}{2} f_2 &=& (\alpha_1^2-1) + (\alpha_2^2-1) = 18  .
\end{array}
\right.
\end{equation}
In the general case, for a fixed value of $n$ and fixed values in $\alpha$, one can
write a system of $n$ polynomial equations in the variable
$f$ with rational coefficients, namely
\begin{equation}
\label{eq:systemp}
p_i(f) = 0, \quad i=1,\ldots, n.
\end{equation}
In this paper, we will also adopt
an alternative formulation of the problem which appears to be more suitable for the
numerical optimization techniques developed in
Section~\ref{sec:numerical}.
The alternative formulation corresponds to inverting the eigenvalue assignment problem
(\ref{eq:assignBF}), thereby obtaining an alternative set of polynomial equations in the unknowns
$k = [k_1 \cdots k_n]^T = [f_1^{-1} \cdots
f_n^{-1}]^T$ with rational coefficients, which have the advantage of 
being linear in the capacitor values, indeed, $k_i = c_i/c$, $i=1,\ldots ,\ n$.
In particular, for the inverse problem, equation (\ref{eq:pol_assign}) becomes
\begin{equation}
\label{eq:pol_assign_inv}
\det(s I - K B^{-1}) = \prod_{i=1}^n \Big( s- (\alpha_i^2-1)^{-1} \Big), \quad \forall s \in \complex ,
\end{equation} 
where $K=\diag(k) = F^{-1}$ is a diagonal matrix whose diagonal
elements are the scalars to be determined. 
Similar to above, for the case $n=2$, we have $KB^{-1} = \smallmat{\frac{3k_1}{4} & \frac{k_1}{2} \\
 \frac{k_2}{2} & k_2}$ which, for the simple selection $\alpha = (2, 4)$ leads to the
following set of polynomial equations ensuring
(\ref{eq:pol_assign_inv}) (and, equivalently, (\ref{eq:pol_assign})):
\begin{equation}
\label{eq:ex3_inv}
\left\{
\begin{array}{rcl}
\frac{1}{2} k_1 k_2 &=&   (\alpha_1^2-1)^{-1}(\alpha_2^2-1)^{-1}= \frac{1}{45}  \\
\frac{3}{4} k_1 + k_2 &=& (\alpha_1^2-1)^{-1} + (\alpha_2^2-1)^{-1} = \frac{2}{5}  .
\end{array}
\right.
\end{equation}
In the general case, for a fixed value of $n$ and fixed values in $\alpha$, one can
write a system of $n$ polynomial equations in the variable
$k$ with rational coefficients, namely
\begin{align}
\label{eq:systemq}
q_i(k) = 0, \quad i=1,\ldots, n.
\end{align}
Formulation (\ref{eq:pol_assign_inv}), (\ref{eq:systemq}) will be used
in Section~\ref{sec:numerical} due to the advantageous property that
all entries of each solution to this polynomial system are in the interval $(0,1)$ as
established in the next lemma.

\begin{lemma}
\label{lem:Kbox}
Given any $n\geq 1$ and
any set of distinct positive even integers $\alpha = (\alpha_1,
\ldots, \alpha_n)$, each solution $k = (k_1,\ldots,k_n)$ to the
inverse eigenvalue problem (\ref{eq:pol_assign_inv}) is such that 
$0< k_i < 1$ for all $i=1,\ldots,n$.
\end{lemma}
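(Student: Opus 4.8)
The plan is to settle positivity and the upper bound separately, in both cases by passing to the symmetrized versions of the assignment problem already introduced in the preceding remarks. For positivity, I note that the prescribed eigenvalues $(\alpha_i^2-1)^{-1}$ are all nonzero, so any solution $k$ of (\ref{eq:pol_assign_inv}) makes $KB^{-1}$, hence $K$, invertible; thus $F:=K^{-1}$ is a well-defined diagonal matrix with $\sigma(BF)=\sigma\big((KB^{-1})^{-1}\big)=\{\alpha_1^2-1,\ldots,\alpha_n^2-1\}$, i.e.\ $F$ solves (\ref{eq:assignBF}), and Lemma~\ref{lem:Fpos} then yields $F>0$, that is $k_i=f_i^{-1}>0$ for all $i$. (Alternatively and self-containedly, $KB^{-1}$ is similar to the symmetric matrix $B^{-1/2}KB^{-1/2}$, which therefore has only positive eigenvalues and so is positive definite, forcing $K>0$; here one uses positive definiteness of $B$, established in the proof of Lemma~\ref{lem:Wpoles}.)

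For the upper bound, I would form the symmetric matrix $M:=\sqrt{F}\,B\,\sqrt{F}$ of (\ref{eq:symm}), which is similar to $BF$ and hence has eigenvalues $\alpha_i^2-1$, $i=1,\ldots,n$. Since the $\alpha_i$ are even integers with $\alpha_i\ge 2$, every eigenvalue of $M$ is at least $3$, so $v^{T}Mv\ge 3\,v^{T}v$ for all $v$. Substituting $v=\sqrt{K}\,w$ (legitimate because $K>0$) and using $\sqrt{K}\sqrt{F}=I$, so that $\sqrt{K}\,M\,\sqrt{K}=B$, this becomes $w^{T}Bw\ge 3\,w^{T}Kw$ for all $w$, i.e.\ $B\ge 3K$ in the sense of quadratic forms. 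Evaluating this inequality on the standard basis vectors $e_i$ gives $B_{ii}\ge 3k_i$; since $B_{ii}=2$ for $i<n$ and $B_{nn}=(n+1)/n\le 2$, I conclude $0<k_i\le B_{ii}/3\le 2/3<1$, which is in fact slightly stronger than the claimed bound.

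The whole argument reduces to a short computation once the change of coordinates $\sqrt{F}$ (equivalently $\sqrt{K}$) is in place, so I do not anticipate a genuine obstacle; the only point requiring care is the logical order, namely that positivity must be established first — via Lemma~\ref{lem:Fpos} or the self-contained symmetrization above — before the positive square roots $\sqrt{F}$, $\sqrt{K}$ can be formed and the conjugation of $M\ge 3I$ into $B\ge 3K$ carried out.
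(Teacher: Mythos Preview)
Your argument is correct and in fact yields the sharper individual bound $k_i\le B_{ii}/3\le 2/3$. The paper's proof proceeds differently for the upper bound: it shows that the diagonal entries $d_i$ of $B^{-1}$ all exceed $\tfrac12$ (via the Schur complement / matrix inversion formula), combines this with the telescoping estimate $\sum_j(\alpha_j^2-1)^{-1}\le\tfrac12$, and then equates $\sum_j d_jk_j=\trace(KB^{-1})$ with that eigenvalue sum to obtain $\tfrac12\sum_j k_j<\tfrac12$, hence $\sum_j k_j<1$ and in particular each $k_i<1$. So the paper works on the \emph{inverse} side $KB^{-1}$ and extracts a trace (sum) bound, while you work on the direct side $M=\sqrt{F}B\sqrt{F}$ and extract a spectral lower bound $M\ge 3I$, then conjugate by $\sqrt{K}$ to get $B\ge 3K$. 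Your route is shorter, avoids both the Schur-complement computation and the telescoping series, and gives a tighter entrywise bound; the paper's route, on the other hand, delivers the additional piece of information $\sum_i k_i<1$, which your argument does not directly produce. For the positivity half, your primary argument (invoke Lemma~\ref{lem:Fpos}) is exactly what the paper does.
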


\begin{proof}
The fact that $k_i>0$ for all $i=1,\ldots,n$ follows from
Lemma~\ref{lem:Fpos} and $K=\diag(k_1,\ldots,k_n) = F^{-1}>0$.

Denote now by $d_i$, $i=1,\ldots,n$ the diagonal entries of $B^{-1}$
and let us show next that $d_i \geq \frac{1}{2}$ for all $n\geq 1$,
for all $i=1,\ldots, n$. To see this,
 for each $i\in \{1,\ldots,n\}$, denote by $e_i$ the $i$-th unit
 vector of the canonical basis of the Euclidean space and
consider the coordinate change 
matrix $T=\smallmat{e_i & e_2 & \cdots & e_{i-1} & e_1 & e_{i+1} &
  \cdots & e_n}$, so that $T_i=T_i^T=T_i^{-1}$ and so that $T_iBT_i$
exchanges the $1$-st and the $i$-th 
rows and columns of $B$. Then it is readily seen that $T_iBT_i$ is
positive definite (by positive definiteness of $B$, as established in 
the proof of Lemma~\ref{lem:Wpoles} in Section~\ref{sec:reson}) and
that the $(1,1)$ element of $T_iB^{-1}T_i = (T_iBT_i)^{-1}$
 corresponds to $d_i$. Partition the matrix
as $T_iBT_i = \smallmat{b_i & b_{i,12}^T\\  b_{i,12} & B_{i,22}}$,
where $b_i$ is the $i$-the diagonal element of $B$ satisfying by
construction $b_i \leq 2$. Since $B_{i,22}>0$, 
the following holds by applying 
 the matrix inversion
formula to the (1,1) element of $T_iB^{-1}T_i = (T_iBT_i)^{-1}$:
\begin{equation}
\label{eq:good_di}
d_i = \left(b_i - b_{i,12}^T\ B_{i,22}^{-1}\  b_{i,12}\right)^{-1}
> b_i^{-1} \geq 2^{-1}.
\end{equation}
As a next step, considering that $\alpha_i$, $i=1,\ldots, n$ are
distinct positive even integers, we have that
\begin{align}
\nonumber
&\displaystyle \sum_{j=1}^n \frac{1}{\alpha_j^2- 1} 
\leq\displaystyle \sum_{j=1}^n\frac{1}{(2j)^2- 1} 
=\frac{1}{2}\displaystyle \sum_{j=1}^n\frac{1}{2j-1}+\frac{1}{2j+1}\\
\nonumber
&=\displaystyle \frac{1}{2}\left[\left(1-\frac{1}{3}\right)+\left(\frac{1}{3}-\frac{1}{5}\right)
+\cdots+\left(\frac{1}{2n-1}-\frac{1}{2n+1}\right)\right]\\
&=\displaystyle \frac{1}{2}\left(1-\frac{1}{2n+1}\right)
\leq \frac{1}{2}.
\label{eq:good_alpha}
\end{align}
%
%
Finally, keeping in mind that the trace of $KB^{-1}$ is the sum of its
eigenvalues and considering equality (\ref{eq:pol_assign_inv}) we can
combine (\ref{eq:good_di}) and (\ref{eq:good_alpha}) to get
$$
\frac{1}{2} \sum_{j=1}^n k_j < \sum_{j=1}^n d_j k_j = \trace
(KB^{-1}) = \sum_{j=1}^n \frac{1}{\alpha_j^2- 1}\leq \frac{1}{2},
$$
which, bearing in mind that $k_j>0$, for all $j\in \{1,\ldots, n\}$, 
implies, for each $i\in \{1,\ldots, n\}$, $k_i \leq \sum_{j=1}^n k_j
<1$ as to be proven.
\end{proof}

\begin{remark}
\label{rem:sensitivity}
(Sensitivity analysis)
Sensitivity of the solution obtained by numerical techniques
(or also by symbolic techniques, by truncating rational numbers
to floating point numbers) can be assessed a posteriori.
Indeed, the eigenvectors of a matrix encode the sensitivity
of the eigenvalues to (unstructured) uncertainty affecting the
entries of the matrix itself. In particular, sensitivity can be and
will be assessed numerically in Section~\ref{sec:alg_sens} using two
main methods: a first one providing only local information
by using the condition number of simple
eigenvalues given in \cite[\S 7.2.2]{Golub96} and a second one
providing an idea about the effect of large perturbations by
graphically displaying 
%
%
the pseudo-spectrum of the matrix (namely the sublevel sets of the
perturbed eigenvalues under norm bounded unstructured perturbations) 
according to \cite{TreEmb05}. As shown in Section~\ref{sec:alg_sens},
these tools provide useful insight about the features of different
solutions leading to interesting interpretations of some of the
observations in \cite{BuchenauerTPS10}.
\end{remark}

\section{Symbolic solution}
\label{sec:symbolic}

In this section we use techniques
from real algebraic geometry to solve
symbolically the
inverse eigenvalue problem (\ref{eq:assignBF}).
We focus on system \eqref{eq:systemp} (issued from \eqref{eq:pol_assign})
whose polynomials $p_i$ conveniently inherit some sparsity
of the tridiagonal matrix $B$, in the sense that
many monomials of the $f_i$ variables are zero. 
In contrast, the polynomials $q_i$ in system
\eqref{eq:systemq} (issued from \eqref{eq:pol_assign_inv}) are less sparse since matrix
$B^{-1}$ is dense.

{\color{black}
Solving problem \eqref{eq:systemp}
amounts to finding a real $n$-dimensional solution
$f=(f_1,\ldots,f_n)$
to a system of $n$ given scalar-valued multivariate 
polynomial equations with rational coefficients.
%
%
Throughout the paper we make the next standing assumptions on
such a system of equations.
}


\begin{assumption}\label{as:finitesol}
There exists a finite number of complex
solutions to the system of polynomial equations (\ref{eq:systemp}).
\end{assumption}

Note that Assumption~\ref{as:finitesol}
readily implies that there is a finite number
of real solutions to the system of polynomial equations
(\ref{eq:systemp}).

\begin{assumption}\label{as:onesol}
There exists at least one real solution to the system of
polynomial equations (\ref{eq:systemp}).
\end{assumption}

In the conclusions section we state open problems
in connection with these assumptions.

\subsection{Real algebraic geometry}

For an elementary tutorial account of real algebraic geometry,
please refer to \cite{cox}. In this paragraph we
survey only a few essential ideas, with a focus
on explicit algorithms.

Finding a solution to polynomial system (\ref{eq:systemp})
amounts to finding a point in the set
\[
{\mathscr V} := \{f \in {\mathbb C}^n \: :\: p_1(f) = 0, \ldots, p_n(f) = 0\}.
\]
Set $\mathscr V$ is a subset of ${\mathbb C}^n$ called a (complex)
algebraic variety because it is the vanishing locus of a finite number of
polynomials. To the geometric object $\mathscr V$ corresponds
an algebraic object:
\[
{\mathscr I} := \{a_1(f)p_1(f) + \cdots + a_n(f)p_n(f) \: :\:
a_1, \ldots, a_n \in {\mathbb Q}[f]\}
\]
which is a subset of ${\mathbb Q}[f]$ called an algebraic ideal.
Elements in $\mathscr I$ are obtained by taking linear combinations
(with polynomial coefficients) of polynomials $p_i$.
We say that ideal $\mathscr I$ is generated by $p_1,\ldots,p_n$, and we
say that $p_1,\ldots,p_n$ is a presentation of $\mathscr I$.
Although $\mathscr I$ has an infinite number of elements, it follows from a
fundamental theorem of Hilbert \cite[Theorem 4, \S 2.5]{cox}
that it has always a finite presentation,
and this allows $\mathscr I$ to be handled by a computer.
Note that every polynomial in $\mathscr I$ vanishes at points $f \in {\mathscr V}$,
and we say that $\mathscr V$ is the variety associated to the ideal $\mathscr I$.

A key idea of algebraic geometry consists in obtaining
useful information on $\mathscr V$ 
from a suitable presentation of $\mathscr I$.
By taking finitely many linear combinations of polynomials $p_i$,
we will generate another equivalent system of polynomials
which generates the same ideal $\mathscr I$ but with another
presentation, and which is associated with the same variety $\mathscr V$.
In particular, when $\mathscr V$ is a discrete set, i.e. when
Assumption~\ref{as:finitesol} is satisfied, this presentation should allow
to compute the solutions easily. A useful presentation
is a Gr\"obner basis \cite[Section 2]{cox}. To obtain such a basis, we can devise
an algorithm using only linear algebra and performing a series
of multivariate polynomial divisions. These divisions
can be carried out provided one defines a suitable
ordering on the set of monomials of variables $f_1,\ldots,f_n$.
For our purpose of computing the solutions, a useful ordering
is the graded reverse lexicographic (grevlex) order, see
\cite[Definition 6 of Section 2.2]{cox}.
Once a Gr\"obner basis is available, we can compute a rational
univariate representation (RUR) 
{\color{black}
\begin{equation}\label{rur}
r(f_{\circ}) = 0,\;
f_1 =  \frac{r_1(f_{\circ})}{r_0(f_{\circ})},\; \ldots\; ,
f_n  =  \frac{r_n(f_{\circ})}{r_0(f_{\circ})},
\end{equation}
}
where $r$ is a suitable 
univariate polynomial in a variable $f_{\circ}$, and $r_0,\ldots,r_n$ are univariate
polynomials of degree less than the degree of $r$.
Variable $f_{\circ}$ is called the separating variable, and it is
a linear combination of the original variables $f_1,\ldots,f_n$.

\begin{proposition}
For system (\ref{eq:systemp}), a  Gr\"obner basis always
exists. Moreover, all but  
a finite number of linear combinations of the variables $f_i$,
$i=1,\ldots, n$ are separating variables. Finally,
once the separating variable is chosen, the RUR exists and is unique.
\end{proposition}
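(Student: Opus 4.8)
The plan is to reduce each of the three assertions to standard facts from the theory of Gröbner bases and rational univariate representations, using Assumption~\ref{as:finitesol} (finitely many complex solutions, i.e.\ $\mathscr V$ is zero-dimensional) as the crucial hypothesis. For the existence of a Gröbner basis: once a monomial ordering is fixed (we use grevlex on $f_1,\ldots,f_n$), a Gröbner basis of the ideal $\mathscr I$ generated by $p_1,\ldots,p_n$ exists unconditionally. I would invoke Buchberger's algorithm, whose termination is guaranteed by the ascending chain condition on monomial ideals (equivalently, Dickson's lemma), and cite \cite{cox} for both the definition and the termination proof. Note this first claim does not even need Assumption~\ref{as:finitesol}.

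For the separating variable: a linear form $f_\circ = \lambda_1 f_1 + \cdots + \lambda_n f_n$ is separating for $\mathscr V$ precisely when it takes pairwise distinct values on the (finitely many, by Assumption~\ref{as:finitesol}) points of $\mathscr V$. For each pair of distinct points $v,w \in \mathscr V$, the set of coefficient vectors $\lambda$ with $\langle \lambda, v-w\rangle = 0$ is a proper linear hyperplane in the coefficient space; since $\mathscr V$ is finite there are finitely many such pairs, so the union of these hyperplanes is a proper algebraic subset, and its complement — the set of separating forms — is nonempty and its complement is finite in the sense the statement intends (it is contained in finitely many hyperplanes). I would phrase this carefully so that "all but a finite number" is read as "all outside a finite union of hyperplanes," matching the usual formulation, and cite the RUR literature (Rouillier) or \cite{cox}.

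For existence and uniqueness of the RUR once $f_\circ$ is fixed: since $\mathscr V$ is zero-dimensional, the quotient ring ${\mathbb Q}[f]/\mathscr I$ is a finite-dimensional ${\mathbb Q}$-vector space, and multiplication by the class of $f_\circ$ is a linear endomorphism whose minimal polynomial I take as $r$. That $f_\circ$ is separating guarantees $r$ is squarefree (its roots are exactly the distinct values of $f_\circ$ on $\mathscr V$) and that each coordinate function $f_i$, being a ${\mathbb Q}$-valued function on $\mathscr V$, can be written as a polynomial in $f_\circ$ modulo $\mathscr I$; clearing denominators via the common factor $r_0$ (the derivative $r'$, or equivalently a suitable interpolation denominator) yields the displayed form \eqref{rur}. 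Uniqueness follows because the $r_i/r_0$ are forced to agree with prescribed values at each root of $r$ and have degree less than $\deg r$, so Lagrange interpolation pins them down; I would state this as the uniqueness of the interpolating data and reduce uniqueness of the representation to it. The main obstacle — really the only non-bookkeeping point — is stating the uniqueness claim precisely: the RUR is unique only after one fixes the normalization of the denominator $r_0$ (and the generator $r$ up to leading coefficient), so I would make that normalization explicit in the proof to avoid a vacuous or false uniqueness assertion.
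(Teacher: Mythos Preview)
Your proposal is correct and aligns with the paper's approach: the paper's proof is a terse citation proof, invoking \cite{cox} for Gr\"obner basis existence and \cite{fglm93}, \cite{rou99}, \cite{bpr06}, \cite{stu02} for the separating-variable genericity and RUR existence/uniqueness, and your sketch is precisely an elaboration of the content behind those references. Your careful reading of ``all but a finite number'' as ``outside finitely many hyperplanes'' and your flagging of the normalization needed for uniqueness are both appropriate refinements of the statement as written.
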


\begin{proof}
The existence of a Gr\"obner basis follows from \cite[Corollary 6, \S
2.5]{cox}. The rest of the proposition can be proven using
 \cite{fglm93}, \cite[Theorem 3.1]{rou99} or \cite[Proposition 12.16]{bpr06}.
See also \cite[Proposition 2.3]{stu02} and the discussion just after,
which explains the connection between RUR and Gr\"obner basis.
\end{proof}

Once a RUR is available, enumerating all the real solutions amounts to computing
all the real roots of the univariate polynomial $r$ in (\ref{rur}), and evaluating
the rational functions $r_i/r_0$ in (\ref{rur}) at those points. By an appropriate
use of Descartes' rule of signs and Sturm sequences, see e.g.
\cite[Section 2.2]{bpr06}, an algorithm can be designed that isolates all real roots
of the univariate polynomial in rational intervals
of arbitrarily small width.

\subsection{Numerical results and sensitivity analysis}
\label{sec:alg_sens}

The simplest possible selection of parameters $\alpha_i$ in the
assignment problem (\ref{eq:assignBF}) is to select them as the smallest possible set
of distinct even and positive integers, namely $\alpha_i = 2i$ for all $i\in
\{1, \ldots, n\}$. This selection gives rise to the smallest possible
coefficient list in (\ref{eq:systemp}) and
leads to a set of solutions to Problem~\ref{prob_stat}
having the smallest possible maximal frequency of the natural resonant
modes of the circuit. This was also the preferred solution addressed
in \cite{ZackPPC09,BuchenauerTPS10}. In particular, for this specific
selection of the resonant frequencies, a set of 10 solutions for the case
$n=6$ has been given in \cite[Table II]{BuchenauerTPS10}. The
advantage of the formal approach of this section is to allow to find the
complete set of solutions, amounting to 12 (2 solutions were missed in
\cite[Table II]{BuchenauerTPS10} which used bounded random sampling
followed by numerical optimization). Another advantage of our results is that
the sets of solutions reported here were computed in a few minutes
using the Maple code reported in Appendix~\ref{sec:maple}.
Table~\ref{tab:allnumbers} reports the numerical values computed by
the analytic algorithm proposed in this section. Note that the
displayed values correspond to $n^2 c_i/c = n^2 f_i^{-1}$. We choose
to represent these values to allow for an easier comparison with the
results in \cite[Table II]{BuchenauerTPS10} and because they are
better numerically conditioned.
Note that solutions number 1 and 2 for the case $n=6$ were not
reported in \cite[Table II]{BuchenauerTPS10}.


\newcommand\rr[1]{\rule{0cm}{0.3cm} {\normalsize{#1}}}
\renewcommand{\tabcolsep}{.1cm}

\begin{table}[ht!]
{
\begin{center}
\begin{tabular}{|c|cccccc|c|}
\hline \rule{0cm}{.6cm}
 & {\normalsize $n^2\dfrac{c_1}{c}$} & {\normalsize $n^2\dfrac{c_2}{c}$} & {\normalsize $n^2\dfrac{c_3}{c}$}
 & {\normalsize $n^2\dfrac{c_4}{c}$} & {\normalsize $n^2\dfrac{c_5}{c}$} & {\normalsize $n^2\dfrac{c_6}{c}$}
 & cond \\[.3cm] \hline 
\multicolumn{2}{|l}{\rr{$n=1$}} & & & & & & \\
$1$ & $1.5$ & & & & & & $1$ 
 \\ \hline
\multicolumn{2}{|l}{\rr{$n=2$}} & & & & & & \\
$1$ & $1.50213$ & $0.47340$ & & & & & $1.1102$ \\
$2$ & $0.63120$ & $1.12660$ & & & & & $1.0266$   
 \\ \hline
\multicolumn{2}{|l}{\rr{$n=3$}} & & & & & & \\
$1$ & $1.49303$ & $1.49229$ & $0.41548$ & & & & $1.1557$ \\
$2$ & $0.84408$ & $0.77662$ & $1.41217$ & & & & $1.0387$   
 \\ \hline
\multicolumn{2}{|l}{\rr{$n=4$}} & & & & & & \\
$1$ & $1.71070$ & $1.29555$ & $1.54667$ & $0.38529$ & & & $1.1849$ \\
$2$ & $1.62637$ & $0.62519$ & $1.73498$ & $0.74862$ & & & $1.0917$ \\
$3$ & $1.06181$ & $2.10211$ & $0.66491$ & $0.89099$ & & & $1.121$ \\
$4$ & $1.13210$ & $0.78731$ & $0.92450$ & $1.60306$ & & & $1.0440$   
 \\ \hline
\multicolumn{2}{|l}{\rr{$n=5$}} & & & & & & \\
$1$ & $2.04567$ & $1.23900$ & $1.35694$ & $1.57111$ & $0.36796$ & & $1.2018$ \\
$2$ & $2.14782$ & $0.63778$ & $1.24610$ & $1.70028$ & $0.68506$ & & $1.1092$ \\
$3$ & $0.99720$ & $1.69936$ & $1.57266$ & $0.64267$ & $1.16088$ & & $1.0558$ \\
$4$ & $1.47480$ & $0.86342$ & $0.84481$ & $1.07344$ & $1.72179$ & & $1.0448$   
\\ \hline
\multicolumn{2}{|l}{\rr{$n=6$}} & & & & & & \\
$1$ & $2.49095$ & $1.25588$ & $1.20240$ & $1.49359$ & $1.53290$ & $0.35987$ & $1.2065$ \\
$2$ & $1.92537$ & $1.79971$ & $1.80083$ & $0.90696$ & $1.45858$ & $0.37545$ & $1.1954$ \\
$3$ & $1.67555$ & $1.98118$ & $2.05786$ & $0.66667$ & $1.30433$ & $0.52174$ & $1.112$ \\
$4$ & $2.65073$ & $1.01602$ & $0.68610$ & $1.82261$ & $1.42150$ & $0.64738$ & $1.1506$ \\
$5$ & $1.34706$ & $2.18478$ & $0.92044$ & $1.84208$ & $0.65432$ & $0.94921$ & $1.0971$ \\
$6$ & $1.95229$ & $0.93587$ & $1.53272$ & $0.63062$ & $1.76898$ & $0.99206$ & $1.0844$ \\
$7$ & $2.46541$ & $0.73028$ & $0.99423$ & $0.93809$ & $1.85839$ & $0.99314$ & $1.1223$ \\
$8$ & $1.79820$ & $0.94167$ & $1.74742$ & $0.62528$ & $1.59040$ & $1.05327$ & $1.0884$ \\
$9$ & $1.43355$ & $1.89698$ & $0.60976$ & $1.73302$ & $1.02388$ & $1.05334$ & $1.1009$ \\
$10$ & $1.50458$ & $1.00778$ & $2.04896$ & $1.05339$ & $0.68756$ & $1.37732$ & $1.0686$ \\
$11$ & $1.38734$ & $1.13092$ & $1.48392$ & $1.32482$ & $0.63764$ & $1.57578$ & $1.0617$ \\
$12$ & $1.87892$ & $0.96056$ & $0.85587$ & $0.91518$ & $1.23619$ &
$1.77345$ & $1.0592$    \\ \hline 
\end{tabular}
\end{center}
}
\caption{The solutions to the Marx design problem computed using
  Gr\"obner basis methods and their conditioning.}
\label{tab:allnumbers}
\end{table}

According to the observations reported in Remark~\ref{rem:sensitivity}
we can characterize the sensitivity of each solution obtained from the
proposed symbolic solution method by looking at the condition number of each
eigenvalue of matrix $BF$ in (\ref{eq:assignBF}). This method can be
applied because all eigenvalues of $BF$ are distinct (therefore
simple) by assumption. In particular, for each eigenvalue of $BF$ its condition
 number corresponds to 
$|w^T v|^{-1}$, where $w$ and $v$ have unit norm and 
are respectively the left and right
eigenvectors associated with that eigenvalue.
The results of the sensitivity analysis is represented by the maximum
condition number among all eigenvalues of matrix $BF$ and is shown in the
last column of Table~\ref{tab:allnumbers} for each one of the computed
solutions. 

Inspecting the different sensitivities it appears that the last
solution for each one of the analyzed cases corresponds to the least
sensitive one, namely the one that is expected to be more robust.
Interestingly, this solution corresponds to the solution qualitatively
characterized in \cite{BuchenauerTPS10} as the ``regular''
solution. Indeed, when looking at the time responses of the Marx generator
network designed with these parameters, one experiences little
dependence on higher order harmonics and some suitable monotone evolution
 of certain voltages in the circuit
(see \cite[Fig. 10]{BuchenauerTPS10} for an example of this). 
Another peculiar feature of the ``regular'' solutions corresponding to
the last solution for each $n$ in Table~\ref{tab:allnumbers} is that
the interpolated values of $n^2 c_i/c$ form a convex function of $i$,
namely (since $n^2/c$ is constant) one has 
$c_j \leq  \frac{c_{j+1}+c_{j-1}}{2}$ for all $j=2,\ldots,n-1$
(see also the red curve in \cite[Fig. 8]{BuchenauerTPS10}
corresponding to the last solution for $n=6$ in
Table~\ref{tab:allnumbers}). 
Moreover, at least up to $n=6$, numerical evidence reveals that there
only exists one such solution. Due to its desirable features both in
terms of numerical robustness and of desirable time evolution of the
arising circuit (as reported in \cite{BuchenauerTPS10}), we will be
imposing this constraint on the numerical optimization described in
the next section, to be able to isolate that specific solution for the
case $n>6$ (or all of such specific solutions, if more than one of
them exist).

\begin{figure}[ht!]
\begin{center}
\includegraphics[width=\columnwidth]{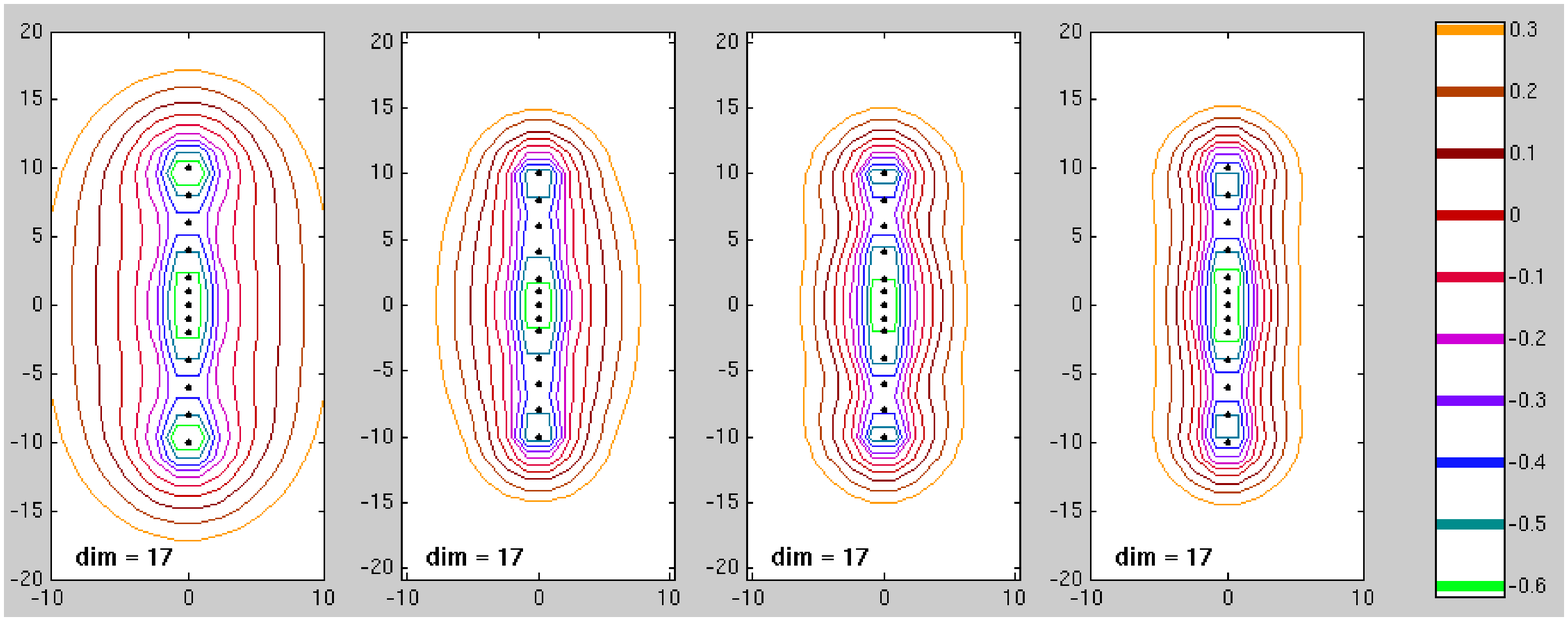}
\end{center}
\caption{The pseudospectra of the state transition matrix $A_0$ for the
  four solutions corresponding to $n=5$.}
\label{fig:pseudo}
\end{figure}

A final comment should be made about the use of the 
pseudospectra of matrix $BF$ as a graphical tool to assess the
sensitivity of each solution of Table~\ref{tab:allnumbers} to larger
perturbations. 
According to the results in  \cite{TreEmb05} and the
corresponding Matlab tool Pseudospectra GUI available in the EigTool
package \cite{TreEmb05}, given $\epsilon>0$,
the associated
pseudospectrum of a matrix $A_0$ corresponds to the following region
in the complex plane:
$$
\{s\in \complex \!\!: \; \exists \Delta\in\complex^{n\times n},
\det(sI_n - A -\Delta) \!=\!0, \mbox{ and } 
\|\Delta\| \leq \epsilon \}
$$
(see Figure~\ref{fig:pseudo} for some examples).
Surprisingly, the graphical aspect of the pseudospectra
appear indistinguishable among all solutions for fixed
$n$. Nevertheless, interesting results are obtained when applying this
analysis to the state transition matrix $A_0$ associated to the
state-space model of the circuit (for fixed values of $\ell$ and
$c$) whose expression is given  in
(\ref{eq:A0}), in Section~\ref{sec:model}. 
An example
of the type of pseudospectra obtained for the case $n=5$ is
represented in Figure~\ref{fig:pseudo} corresponding to the case $c=\ell=1$. 
The figure shows that once again the ``regular''
solution is associated with the least sensitive scenario. 
Visually, this corresponds to the tightest pseudospectra (see the rightmost case in 
Figure~\ref{fig:pseudo}) which can be best appreciated by inspecting
the largest level set of the figure, corresponding to the selection
$\epsilon = 10^{0.3}$ according to the legend to the right of the figure.
The other values of $n$ lead
to similar results.


\section{Numerical solution}
\label{sec:numerical}

In this section we use convex optimization techniques
to find numerically the real solutions of our polynomial
system of equation. 
As compared to the previous section, 
we focus here on the alternative formulation \eqref{eq:systemq}
because for this formulation, according to Lemma~\ref{lem:Kbox},
all the
 real solutions satisfy
$|k_i| \leq 1$, $i=1,2,\ldots,n$. As explained
e.g. in \cite{HenLas05}, for numerical reasons
it is very important that the problem unknowns are
scaled down to the unit interval.

\subsection{Problem formulation}
\label{sec:num_probstat}

A numerical approach solution to the inverse eigenvalue problem
presented in Section~\ref{sec:twoform}
 consists
in formulating it first as a nonconvex polynomial optimization problem:
\begin{equation}\label{polyopt}
\begin{array}{rclll}
q^* & = & \min_k & q_0(k) \\
&& \mathrm{s.t.} & k \in {\mathscr K}
\end{array}
\end{equation}
where the objective function $q_0 \in {\mathbb Q}[k]$ is a given polynomial of the
vector of indeterminates $k \in {\mathbb R}^n$, and,
based on (\ref{eq:systemq}) and on the discussion about ``regular
solutions'' in Section~\ref{sec:alg_sens},
 the feasibility
set
\[
\begin{array}{l}
{\mathscr K} = \{k \in {\mathbb R}^n \: :\: q_i(k) = 0, \:\: i=1,\ldots,n, \\
\quad\quad\quad g_j(k) := k_{j}-2k_{j+1}+k_{j+2} \geq 0, \:\: j=1,\ldots,n-2\}
\end{array}
\]
is the real algebraic variety corresponding to the zero locus
of the ideal $\mathscr I$ studied in Section~\ref{sec:symbolic}
intersected with the polyhedron modeling the convexity
constraints introduced in Section~\ref{sec:alg_sens}.

A typical objective function in problem (\ref{polyopt})
can be the positive definite convex quadratic form
\[
q_0(k) = \sum_{i,j=1}^n (k_i-k_j)^2 = k^T
\smallmat{
		2 & -1 & 0 & \cdots & 0\\
		-1 & \ddots & \ddots & \ddots & \vdots\\
		0 & \ddots & \ddots & \ddots & 0\\
		\vdots & \ddots & \ddots & 2 & -1 \\
		0 & \cdots & 0 & -1 & 2
}
k
\]
so that capacitors $c_i = k_i/c$ are as identical as possible,
but we can also consider other relevant objective functions,
not necessarily quadratic, definite in sign or convex.

Optimization problem (\ref{polyopt}) is finite-dimensional, algebraic,
but nonconvex since the feasibility set $\mathscr K$ is disconnected,
as a union of isolated points (because of Assumption~\ref{as:finitesol}). Local optimization techniques
based on nonlinear programming are likely to face troubles with
such sets.
Function $q_0$ is continuous and we optimize it over
$\mathscr K$ which is compact, since by Assumptions~\ref{as:finitesol}
and~\ref{as:onesol}
 there is at least one real solution
and at most a finite number of isolated real solutions.
It follows that optimization problem (\ref{polyopt})
has at least one solution.
Since $q_0$ is not necessarily convex and $\mathscr K$ is
disconnected, we do not expect optimization problem (\ref{polyopt})
to have a unique global minimizer.
However, we expect the number of global minimizers to be
significant smaller than the cardinality of set $\mathscr K$.

\subsection{Optimization method using Gloptipoly}

Our optimization method is based on an idea first described in \cite{Las00}
which consists in reformulating a nonconvex global optimization problem with polynomial data
(i.e. minimization of a polynomial objective function subject to
polynomial inequalities and/or equations) as an equivalent convex linear programming (LP) problem
over probability measures. Instead of optimizing over a vector in a finite-dimensional
Euclidean space, we optimize over a probability measure in an infinite-dimensional
Banach space. The measure is supported on the feasibility set of the optimization problem,
which is algebraic in our case, and we require in addition that the set is bounded
(which is true in our case, by assumption).
More concretely, a probability measure is understood as a linear functional
acting on the space of continuous functions, and we manipulate a measure
through its moments, which are images of monomials (which are dense w.r.t. the supremum norm
in the space of continuous functions with compact support). Using results on functional analysis
and real algebraic geometry, and under some mild assumption on the compact support,
a sequence of real numbers are moments of a probability measure if they belong
to an appropriate affine section of the cone of positive semidefinite linear operators,
an infinite-dimensional convex set. We then construct a hierarchy of finite-dimensional
truncations of this convex set, namely affine sections of the cone of positive semidefinite
matrices of fixed size. As a result, solving an LP in the set of probability measures
with compact semi-algebraic support boils down to solving a hierarchy of semidefinite
programming (SDP) problems, also called linear matrix inequalities (LMIs).

When there is a finite number of global optimizers, the approach is guaranteed to converge in a finite number
of steps, and the global optimizer(s) can be extracted with the help of numerical linear
algebra, see \cite{HenLas05}. We have then a numerical certificate of global optimality of the solution(s).
This approach has been successfully applied to solve globally various polynomial optimization
problems, see \cite{Lau09} and \cite{Las09} for general overviews of results and applications.
For applications in systems control, the reader is referred to the survey \cite{HenLas04}.
For finding real solutions of systems of polynomial equations and real radical ideals,
the approach has been comprehensively studied in \cite{LasLauRos08}.

More explicitly, we now describe our approach to the numerical solution of problem (\ref{polyopt}).
We consider a compact set ${\mathscr K} \subset {\mathbb R}^n$ and we denote by
${\mathscr M}({\mathscr K})$ the Banach space of
Borel measures supported on $\mathscr K$. These are nonnegative functions from the Borel sigma-algebra
of subsets of $\mathscr K$ to the real line ${\mathbb R}$.  Given a measure
$\mu \in {\mathscr M}({\mathscr K})$ we define
its moment of order $\alpha \in {\mathbb N}^n$ as the real
number~\footnote{The notation $\alpha_i$ is used in (\ref{moment}) and
  the remaining derivations in this section, for
  consistency with the notation used in \cite{Las00} and references
  therein. However, they should not be confused with the scalars
  $\alpha_i$ used in Theorem~\ref{th:design}.}
\begin{equation}\label{moment}
y_{\alpha} = \int_{\mathscr K} x^{\alpha} \mu(dx) \in \mathbb R
\end{equation}
where we use the multi-index notation for monomials, i.e. $x^{\alpha} = \prod_{i=1}^n x^{\alpha_i}_i$.
We define the infinite-dimensional vector $y=\{y_{\alpha}\}_{\alpha \in {\mathbb N}^n}$ as the sequence
of moments of $\mu$.
Note that $y_0 = \int \mu = \mu({\mathscr K}) = 1$ whenever $\mu \in {\mathscr M}({\mathscr K})$
is a probability measure.
Moreover, if for each $k \in {\mathscr K}$, $|k_i|\leq 1$ for all $i$
(this is what we establish in Lemma~\ref{lem:Kbox}), then
$|y_{\alpha}|\leq 1$ for all $\alpha \in {\mathbb N}^n$. Conversely,
for larger sets ${\mathscr K}$, the variable $y_{\alpha}$ may grow
very large and this is not convenient for numerical reasons. This
aspect has been pointed out in  \cite{HenLas05} and motivates Lemma~\ref{lem:Kbox}.

Given a sequence $y$, we define the Riesz linear functional $\ell_y : {\mathbb R}[x] \to {\mathbb R}$
which acts on polynomials $\pi(x) = \sum_{\alpha} \pi_{\alpha} x^{\alpha}$
as follows: $\ell_y(\pi(x)) = \sum_{\alpha}
\pi_{\alpha} y_{\alpha}$. If sequence $y$ has a representing measure $\mu$, integration of polynomial $\pi(x)$
w.r.t. $\mu$ is obtained by applying the Riesz functional $\ell_y$ on $\pi(x)$, since $\ell_y(\pi(x)) = \int \pi(x)\mu(dx) =
\int \sum_{\alpha} \pi_{\alpha} x^{\alpha} \mu(dx) = \sum_{\alpha} \pi_{\alpha} \int x^{\alpha} \mu(dx) =
\sum_{\alpha} \pi_{\alpha} y_{\alpha}$.

If we apply the Riesz functional on the square of a polynomial $\pi(x)$ of degree $d$, then we obtain a form which
is quadratic in the coefficient vector $\pi=\{\pi_\alpha\}_{|\alpha|\leq d}$ and which we denote
\[
\ell_y(\pi^2(x)) = \pi^T M_d(y) \pi
\]
where $M_d(y)$ is a symmetric matrix which is linear in $y$, called the moment matrix of order $d$.
Rows and columns in this matrix are indexed by vectors $\alpha \in {\mathbb N}^n$ and $\beta \in {\mathbb N}^n$,
and inspection reveals that indeed the entry $(\alpha,\beta)$ in matrix $M_d(y)$ is the moment $y_{\alpha+\beta}$.
Given a polynomial $\chi(x)$ we let
\[
\ell_y(\pi^2(x)\chi(x)) = \pi^T M_d(\chi,y) \pi
\]
where $M_d(\chi,y)$ is a symmetric matrix which is linear in $y$ and linear in coefficients of $\chi(x)$,
called the localizing matrix of order $d$ w.r.t. $\chi(x)$. The localizing matrix is a linear combination
of moment matrices, in the sense that entry $(\alpha,\beta)$ in $M_d(\chi,y)$ is equal to
$\sum_{\gamma} \chi_{\gamma} y_{\alpha+\beta+\gamma}$.

\subsection{Application to the eigenvalue assignment problem}

Based on the optimization method presented in the previous section, we
now formulate the polynomial optimization problem (\ref{polyopt}) as
a hierarchy of finite-dimensional LMI problems with the help of the
Matlab interface GloptiPoly 3 \cite{HenLas09}. Then, we use public-domain
implementations of primal-dual interior point algorithms to solve the
LMI problems. These algorithms rely on a suitable logarithmic barrier function for
the SDP cone, and they proceed by iteratively reducing the duality gap between
the primal problem and its dual, which is also an LMI problem. Each iteration
consists in solving a Newton linear system of equations, involving the gradient
and the Hessian of a Lagrangian built from the barrier function. Most of the
computational burden comes from the construction and the storage of the
Hessian matrix, and problem sparsity can be largely exploited at this stage.
For more information on SDP and related optimization methods, see e.g. \cite{BenNem01}.
For our numerical examples we have been using the SDP solver
SeDuMi 1.3 \cite{Stu99}.

More specifically,
let $d_i=\lceil\frac{\deg q_i}{2}\rceil$, $i=0,1,\ldots,n$, and consider the optimization problem
\begin{equation}\label{lmirelax}
\begin{array}{rcll}
q^*_d & = & \inf_y & \ell_y(q_0) \\
&& \mathrm{s.t.} & y_0 = 1 \\
&&& M_d(y) \succeq 0 \\
&&& M_{d-d_i}(q_i,y) = 0, \quad i=1,\ldots,n \\
&&& M_{d-1}(g_j,y) \succeq 0, \quad j=1,\ldots,n-2
\end{array}
\end{equation}
for $d \geq \max\{d_i\}_{i=0,1,\ldots,n}$,
where $\succeq 0$ stands for positive semidefinite.

In the above problem, the unknown is
the truncated sequence $y$ of moments of degree up to $2d$, and the constraints are convex
linear matrix inequalities (LMI) in $y$. Problem (\ref{lmirelax}) is called the LMI relaxation
of order $d$ of problem (\ref{polyopt}). It can be proved that the
infimum in LMI problem (\ref{lmirelax}) is attained, as stated in the
next proposition which is proven in \cite[Theorem 6.1]{Las09}.

\begin{proposition}\label{lmiglobal}
The optimal values of (\ref{lmirelax}) satisfy
$q^*_{d} \leq q^*_{d+1}$. Moreover, under Assumptions~\ref{as:finitesol}
and~\ref{as:onesol} there exists a finite $d^* \in {\mathbb N}$
such that $q^*_d=q^*$ for all $d\geq d^*$,
where $q^*$ is the optimal value of (\ref{polyopt}).
\end{proposition}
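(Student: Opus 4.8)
The plan is to derive the monotonicity $q^*_d\le q^*_{d+1}$ by a bookkeeping argument, to record the trivial bound $q^*_d\le q^*$, and then to obtain the reverse inequality $q^*_d\ge q^*$ for large $d$ from a flat-extension argument made available by the zero-dimensionality of $\mathscr I$. For the monotonicity: if $y$ is feasible for the order-$(d+1)$ relaxation, then its truncation to moments of degree at most $2d$ is feasible for the order-$d$ relaxation, because $M_d(y)$ is a principal submatrix of $M_{d+1}(y)$ and each $M_{d-d_i}(q_i,y)$ and $M_{d-1}(g_j,y)$ is a submatrix of its order-$(d+1)$ counterpart, so positive semidefiniteness and the equality constraints are inherited; since the cost $\ell_y(q_0)$ involves only low-order moments, the truncated sequence has the same value, whence $q^*_d\le q^*_{d+1}$. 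Comparing instead against the Dirac measure $\delta_{k^*}$ at a global minimizer $k^*$ of (\ref{polyopt}) --- whose truncated moment sequence $y$ satisfies $M_{d-d_i}(q_i,y)=0$ since $q_i(k^*)=0$, $M_{d-1}(g_j,y)\succeq 0$ since $g_j(k^*)\ge 0$, and $\ell_y(q_0)=q_0(k^*)=q^*$ --- gives $q^*_d\le q^*$ for every $d$. The whole proof therefore reduces to proving $q^*_d\ge q^*$ for $d$ large enough.

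Here Assumptions~\ref{as:finitesol} and~\ref{as:onesol} enter. By Assumption~\ref{as:finitesol} the ideal $\mathscr I$ is zero-dimensional, so $\mathbb R[k]/\mathscr I$ is a finite-dimensional vector space and $\mathscr K$ is a finite, nonempty (Assumption~\ref{as:onesol}) set of isolated points on which the continuous function $q_0$ attains the value $q^*$. I would first observe that the infimum in (\ref{lmirelax}) is attained: the feasible set is closed, and it is bounded thanks to the scaling $0\le k_i\le 1$ of Lemma~\ref{lem:Kbox} (which may be appended as redundant localizing constraints, making the underlying quadratic module Archimedean and hence forcing $|y_\alpha|\le 1$), so an optimal moment sequence $y^{(d)}$ exists for each $d$. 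The heart of the argument is then to show that for all $d$ beyond a threshold $d^*$ depending only on $\mathscr I$ --- through, say, the stabilization degree of the Hilbert function of $\mathbb R[k]/\mathscr I$, or the largest degree in a Gr\"obner basis of $\mathscr I$ --- the moment matrix $M_d(y^{(d)})$ of an optimal solution is flat, i.e.\ $\operatorname{rank}M_d(y^{(d)})=\operatorname{rank}M_{d-1}(y^{(d)})$. The mechanism is that the constraints $M_{d-d_i}(q_i,y)=0$ force $\ell_{y^{(d)}}$ to annihilate each $q_i$ times every polynomial of suitably bounded degree, so the kernel of $M_d(y^{(d)})$ contains a truncation of $\mathscr I$; since $\mathbb R[k]/\mathscr I$ is finite-dimensional, the rank of $M_d(y^{(d)})$ cannot grow indefinitely with $d$ and must stabilize. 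Once a flat moment matrix is available, the Curto--Fialkow flat extension theorem provides a unique, finitely atomic representing measure $\mu^{(d)}$, and the localizing constraints $M_{\cdot}(q_i,y^{(d)})=0$ and $M_{\cdot}(g_j,y^{(d)})\succeq 0$ confine its support to $\{q_i=0\}\cap\{g_j\ge 0\}=\mathscr K$. Since $q_0\ge q^*$ on $\mathscr K$ and $\mu^{(d)}$ is a probability measure, $q^*_d=\ell_{y^{(d)}}(q_0)=\int_{\mathscr K}q_0\,d\mu^{(d)}\ge q^*$, and together with $q^*_d\le q^*$ this yields $q^*_d=q^*$ for all $d\ge d^*$.

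I expect the flat-extension/rank-stabilization step to be the main obstacle: pinning down a usable threshold $d^*$ and proving rigorously that the kernel of $M_d(y^{(d)})$ contains enough of $\mathscr I$ to force $\operatorname{rank}M_d=\operatorname{rank}M_{d-1}$ relies essentially on the zero-dimensionality of $\mathscr I$ and is exactly the piece one imports from the theory of truncated moment problems on finite varieties, as developed in \cite{LasLauRos08} and surveyed in \cite{Lau09} (and packaged in \cite[Theorem 6.1]{Las09}). The remaining ingredients --- the monotonicity, the bound $q^*_d\le q^*$, the attainment via the Lemma~\ref{lem:Kbox} scaling, and the final averaging inequality --- are routine.
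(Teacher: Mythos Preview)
Your proposal is correct and aligns with the paper's approach: the paper does not give an independent proof but simply cites \cite[Theorem 6.1]{Las09}, and your sketch accurately unpacks the standard ingredients behind that theorem (truncation for monotonicity, Dirac measures for the upper bound, and flat extension via zero-dimensionality for finite convergence), ultimately pointing to the very same reference. Your identification of the flat-extension/rank-stabilization step as the substantive piece imported from \cite{LasLauRos08,Lau09,Las09} is exactly right.
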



{\color{black}
Roughly speaking, Proposition~\ref{lmiglobal} establishes that the
solutions to the 
sequence of relaxations
 converge at a
finite (although unknown) value of $d=d^*$, so that solving
LMI (\ref{lmirelax}) is equivalent to solving problem (\ref{polyopt}). }
We should remark that while LMI solutions
are not very accurate, they can be obtained cheaply (at least for
these examples) since we do not need to enumerate all real
solutions, only the optimal one(s). Moreover, if a computed
solution is not deemed accurate enough, it can be refined
locally afterwards by Newton's method if required.

\begin{remark}
\label{rem:relax}
Proposition~\ref{lmiglobal} states that solving nonconvex polynomial optimization (\ref{polyopt})
is equivalent to solving convex LMI problem (\ref{lmirelax}) for a sufficiently large
relaxation order $d^*$
if the feasibility set has zero dimension and is nonempty
. Unfortunately, it is not possible to give a priori useful (lower or upper)
bounds on $d^*$,
and the strategy followed in \cite{HenLas05} is to detect global optimality of
an LMI relaxation by inspecting the rank of the moment matrix and then extract
the globally optimal solutions by linear algebra, see also \cite{LasLauRos08}
and \cite[Algorithm 6.1]{Las09}.
For all the Marx generator design problems that we considered, we
observed that the global optimum is certified at the smallest possible
LMI relaxation, i.e., in Proposition~\ref{lmiglobal}, 
$
d^* = \max\nolimits\limits_{i=0,1,\ldots,n}\left\lceil\frac{\deg q_i}{2}\right\rceil
=\left\lceil\frac{n}{2}\right\rceil . 
$
\end{remark}

\subsection{Numerical results and sensitivity analysis}

We applied this numerical approach on our examples, with
different objective functions, and the overall
conclusion is that the cases $n=2,3,4,5$ are solved very
easily (in a few seconds) but the solution
(obtained with SeDuMi) is not very accurate. The case $n=6$ is
solved in a few minutes, and the case $n=7$
is significantly harder: it takes a few hours to be solved
and it provides the following ``regular'' solution (in the sense
introduced in Section~\ref{sec:alg_sens}):

{
\begin{center}
\begin{tabular}{|c|c|c|c|c|c|c|}
\hline \rule{0cm}{.6cm}
 {\normalsize $n^2\dfrac{c_1}{c}$} & {\normalsize $n^2\dfrac{c_2}{c}$} & {\normalsize $n^2\dfrac{c_3}{c}$}
 & {\normalsize $n^2\dfrac{c_4}{c}$} & {\normalsize $n^2\dfrac{c_5}{c}$} & {\normalsize $n^2\dfrac{c_6}{c}$}& {\normalsize $n^2\dfrac{c_7}{c}$} \\[.3cm] \hline 
\rule{0cm}{.3cm}
$2.07061$ & $1.05669$ & $1.04940$ & $1.05715$ & $1.06861$ & $1.08449$ & $1.85298$ \\  \hline
\end{tabular}
\end{center}
}

For this solution, we can compute the sensitivity level using the same
algorithm used in the last column of Table~\ref{tab:allnumbers} and we
obtain $1.0502$.
Finally, solving the case $n=8$ takes approximately $15$ hours and leads
to the following set of parameters. The sensitivity of this solution 
corresponds to $1.0617$.
{\color{black}
Figure~\ref{fig:sim} shows a time history of the corresponding
response
with $c=\ell=1$, with the notation introduced later in Figure~\ref{fig:circuit2}.
The simulation shows that all the energy initially stored in the
storage capacitors is transferred to the load capacitor (note that the
black curve in the lower plot represents $v_L/n$).
Note also that the eight storage and parasitic voltages are
characterized by an ordering which remains constant along the whole
trajectory, which is a peculiar feature of the so-called ``regular''
solution (note that a similar behavior is obtained for the regular
solution associated with $n=6$, as reported in \cite[Fig. 10]{BuchenauerTPS10}).
}

{
\begin{center}
\begin{tabular}{|c|c|c|c|c|c|c|c|}
\hline \rule{0cm}{.6cm}
 {\normalsize $n^2\dfrac{c_1}{c}$} & {\normalsize $n^2\dfrac{c_2}{c}$} & {\normalsize $n^2\dfrac{c_3}{c}$}
 & {\normalsize $n^2\dfrac{c_4}{c}$} & {\normalsize $n^2\dfrac{c_5}{c}$} & {\normalsize $n^2\dfrac{c_6}{c}$}& {\normalsize $n^2\dfrac{c_7}{c}$} & {\normalsize $n^2\dfrac{c_8}{c}$}\\[.3cm] \hline 
\rule{0cm}{.3cm}
$2.39407$ & $1.17326$ & $1.12475$ & $1.11221$ & $1.10440$ & $1.09960$
& $1.09985$ & $1.87282$  \\  \hline
\end{tabular}
\end{center}
}

\begin{figure}[ht!]
\begin{center}
\includegraphics[width=\columnwidth]{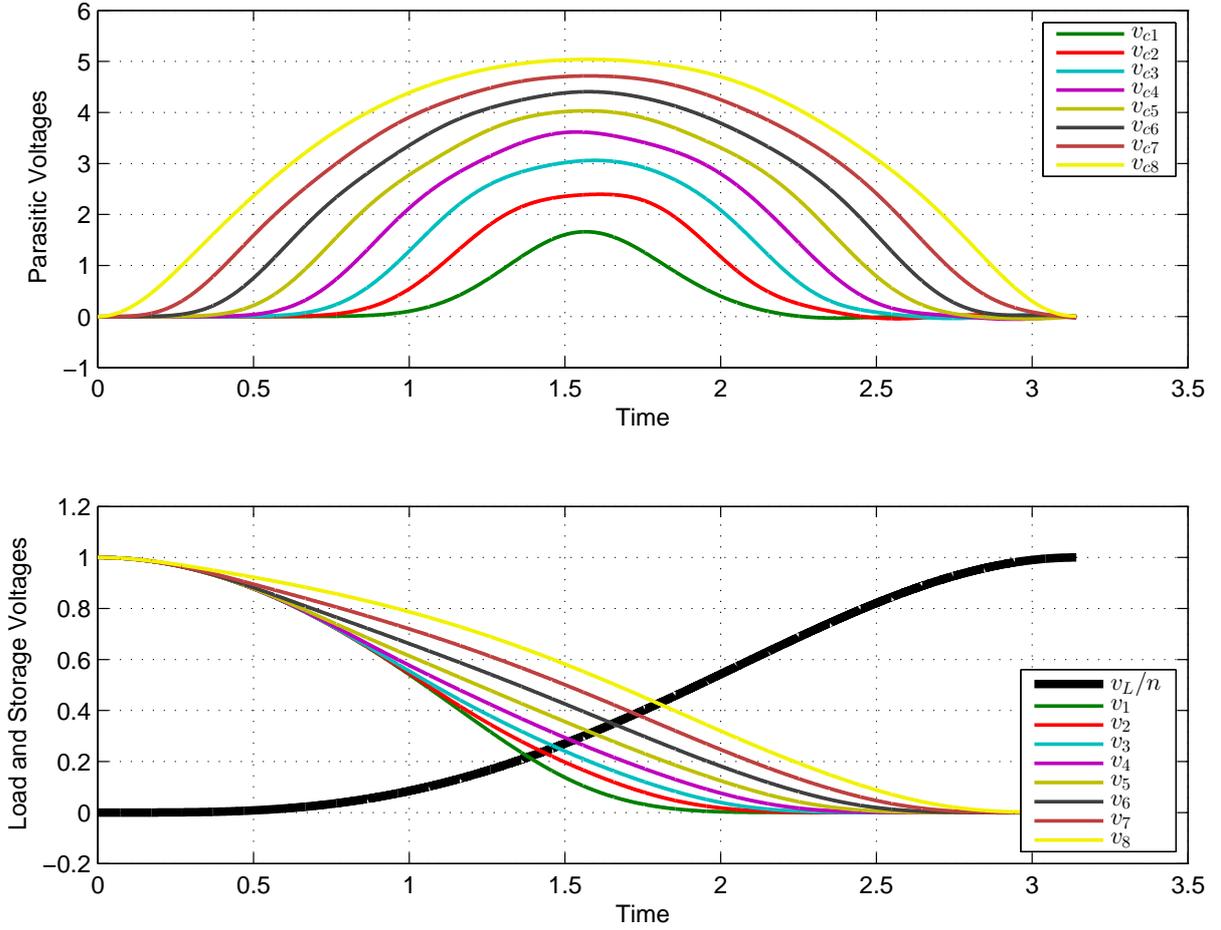}
\end{center}
\caption{{\color{black} Time histories of the parasitic capacitor voltages (upper plot) and
  of the load and storage capacitors voltages (bold and solid curves
  in lower plot) for the regular solution to the case $n=8$ 
  found via numerical optimization.}}
\label{fig:sim}
\end{figure}

\section{Circuit description and proofs}
\label{sec:proofs}

We carry out the proof of Theorem~\ref{th:design} by first providing a
mathematical description of the circuit (Section~\ref{sec:model}),
then proving a useful resonance result (Section~\ref{sec:reson}) and
then proving the theorem (Section~\ref{sec:proof}).

\subsection{Circuit description}
\label{sec:model}

Following an approach similar to the one adopted in \cite{Antoun06},
it is possible to
derive a state-space representation of the Marx generator of
Figure~\ref{fig:circuit} using $3n+2$ state variables comprising $2n+1$ voltages
across the $2n+1$ circuit capacitors, and $n+1$ currents flowing in the $n+1$ inductors.

\begin{figure}[ht!]
\begin{center}
\includegraphics[width=\columnwidth]{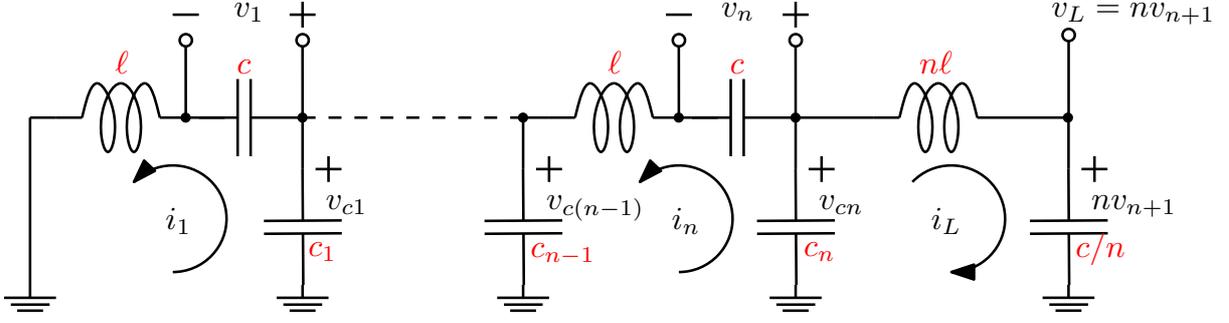}
\end{center}
\caption{Sign conventions in the models of the Marx generator circuit.}
\label{fig:circuit2}
\end{figure}

In particular, using the sign convention and the component values
depicted in Figure~\ref{fig:circuit2}
(see also Figure~\ref{fig:circuit}), 
we can define the state variable as 
{\color{black}
\begin{equation}
\label{eq:state}
x = [v_{c1}, \cdots, \; v_{cn}, \; v_1, \cdots, v_n,\; v_{n+1},\; i_1, \cdots, i_n, \; i_L]^T,
\end{equation}
}
{\color{black} (note that $i_L$ has opposite direction to the other
  currents to simplify the derivations in the proof of Theorem~\ref{th:design})}
and the linear dynamics of the circuit correspond to the equations 
\begin{align*}
&c \dot v_k = i_k,\;  k=1,\ldots, n, 
\quad c \dot v_{n+1} = i_L, \\ 
& \ell\dot i_1 = v_{c1} - v_1, \quad n \ell \dot i_L = v_{cn} - n v_{n+1}, \\
& \ell\dot i_k = v_{ck} - v_{c(k-1)} - v_k,\;  k=2,\ldots, n,\\
& c_k \dot v_{ck} = i_{k+1} - i_k,\; k=1,\ldots, n-1, \quad
c_n \dot v_{cn} = -i_L - i_n, 
\end{align*}
which can be written in compact form using the following
linear state-space model $\dot x =A_0 x$ where 
{\color{black}
\begin{equation}
\label{eq:A0}
A_0:= \mymatrix{c|c}{0_{(2n+1)\times(2n+1)} & 
  \begin{array}{c}-\frac{1}{c} F\Sigma  J_{-1}\\[.2cm] \frac{1}{c} I_{n+1}  \end{array} \\ \hline \\[-.3cm]
  \begin{array}{cc}\frac{1}{\ell} J_{n+1}^{-1} J_{-1} \Sigma^T &-\frac{1}{\ell} I_{n+1}\end{array} &
  0_{(n+1)\times(n+1)}
    },
\end{equation}
where $0_{q \times q}\in \real^{q\times q}$ is a square matrix of zeros, 
$I_q$ is the identity matrix of size $q$,
$J_{n+1} = \diag(1, \ldots, 1, n) \in \real^{(n+1)\times (n+1)}$,
$J_{-1} = \diag(1, \ldots, 1, -1) \in \real^{n\times n}$
 are both diagonal matrices,
$\Sigma = \smallmat{1 & -1 & 0 & \cdots & 0\\ 
                    0 & 1  & -1 & \cdots & 0\\
                    \vdots & \vdots & \ddots & \ddots & \vdots \\
                    0 & \cdots & 0 & 1 & -1} \in \real^{n\times
                    (n+1)}$ and $F$ is defined in the statement of Theorem~\ref{th:design}.
}

\subsection{A sufficient resonance condition}
\label{sec:reson}

In this section we establish a preliminary result that will be useful for the
proof of Theorem~\ref{th:design} and which is based on the
description (\ref{eq:A0}) of the generator dynamics.  

{\color{black}

\begin{lemma}
\label{lem:Wpoles}
Consider the Marx circuit in Figure~\ref{fig:circuit} and the matrices
$B$ in (\ref{eq:matrix_B}) and $F = \diag(c/c_1, \ldots , c/c_n)$. 
The matrix $I+  B F$ has $n$ real positive eigenvalues. 
Moreover, denoting by
$a_1^2,\ldots,a_n^2$ such $n$ real positive eigenvalues and fixing $a_0 = 1$,
matrix $A_0$ in (\ref{eq:A0}) has $n$ eigenvalues in $s=0$ having $n$ distinct eigenvectors,
and $n+1$ pairs of purely imaginary conjugate eigenvalues in
$s = \pm \jmath \omega_0 a_k$, $k=0,\ldots, n$.
\end{lemma}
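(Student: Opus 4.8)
The plan is to reduce both claims to the spectrum of the small matrix $I_n+BF$ by exploiting the block anti-triangular structure of $A_0$ in~(\ref{eq:A0}). First I would record that $B>0$ (this is the positive-definiteness of $B$ already invoked in the proofs of Lemma~\ref{lem:Fpos} and Lemma~\ref{lem:Kbox}): the cleanest route is to verify, by inspection of entries, the factorization $B=\Sigma J_{n+1}^{-1}\Sigma^T$, all diagonal entries of the right-hand side being $1+1=2$ except the last one which equals $1+\tfrac1n=\tfrac{n+1}{n}$ precisely because of the $\tfrac1n$ in the last slot of $J_{n+1}^{-1}$, and the off-diagonal entries being $-1$ and $0$ as in~(\ref{eq:matrix_B}). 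Since $\Sigma\in\real^{n\times(n+1)}$ has full row rank and $J_{n+1}^{-1}>0$, this yields $B>0$. As $F=\diag(c/c_1,\dots,c/c_n)>0$ (all capacitances being positive), $BF$ is similar to $F^{1/2}BF^{1/2}>0$ and hence $I_n+BF$ has $n$ real positive eigenvalues, which I denote $a_1^2,\dots,a_n^2$, with $a_0:=1=a_0^2$. This settles the first assertion.

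For the statement on $A_0$ I would write $A_0=\smallmat{0 & R\\ L & 0}$ with $R=\smallmat{-\tfrac1c F\Sigma J_{-1}\\ \tfrac1c I_{n+1}}$ the $(2n+1)\times(n+1)$ block and $L=\smallmat{\tfrac1\ell J_{n+1}^{-1}J_{-1}\Sigma^T & -\tfrac1\ell I_{n+1}}$ the $(n+1)\times(2n+1)$ block of~(\ref{eq:A0}), so that $A_0^2=\diag(RL,\,LR)$ and a direct product gives $LR=-\tfrac{1}{\ell c}\bigl(I_{n+1}+N\bigr)$ with $N:=J_{n+1}^{-1}J_{-1}\Sigma^T F\Sigma J_{-1}$. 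The crucial step is that $J_{-1}$ is a diagonal sign matrix, hence $J_{-1}J_{n+1}^{-1}J_{-1}=J_{n+1}^{-1}$; writing $N=(J_{n+1}^{-1}J_{-1}\Sigma^T)(F\Sigma J_{-1})$ and using that $XY$ and $YX$ have the same nonzero eigenvalues with multiplicities, the nonzero eigenvalues of $N$ coincide with those of $(F\Sigma J_{-1})(J_{n+1}^{-1}J_{-1}\Sigma^T)=F\Sigma J_{n+1}^{-1}\Sigma^T=FB$, i.e.\ with the $n$ positive eigenvalues of $BF$. Since $N$ has size $n+1$, its remaining eigenvalue is therefore $0$, so $\sigma(I_{n+1}+N)=\{a_0^2,a_1^2,\dots,a_n^2\}$ and $\sigma(LR)=\{-\omega_0^2 a_k^2:k=0,\dots,n\}$, recalling $\omega_0^2=(\ell c)^{-1}$.

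It remains to descend from $A_0^2$ to $A_0$. Since every $a_k>0$, $LR$ is invertible, which forces $R$ to have full column rank $n+1$ and $L$ full row rank $n+1$; then $RL$ (of size $2n+1$) has rank $n+1$ and shares the nonzero eigenvalues of $LR$ with multiplicities. Hence, counting with multiplicity, $A_0^2$ has the $2(n+1)$ eigenvalues $-\omega_0^2 a_k^2$ (each appearing in both $RL$ and $LR$) together with $0$ of multiplicity $(3n+2)-2(n+1)=n$; taking square roots shows the nonzero spectrum of $A_0$ consists of the $n+1$ conjugate pairs $s=\pm\jmath\omega_0 a_k$, $k=0,\dots,n$, while $0$ has algebraic multiplicity $n$. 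Finally, $A_0\smallmat{v_1\\v_2}=0$ gives $Rv_2=0$, hence $v_2=0$ since $R$ is injective, and $Lv_1=0$, so $\ker A_0=\ker L\times\{0\}$ has dimension $2n+1-(n+1)=n$; thus the geometric and algebraic multiplicities of the eigenvalue $0$ coincide, i.e.\ $A_0$ has $n$ linearly independent eigenvectors associated to $0$.

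I expect the main obstacle to be the bookkeeping around the zero eigenvalue: bridging the $(n+1)$-dimensional matrix $LR$ and the $n$-dimensional $I_n+BF$ through the $XY\leftrightarrow YX$ spectral identity together with the collapse $J_{-1}J_{n+1}^{-1}J_{-1}=J_{n+1}^{-1}$, and then—beyond merely counting the algebraic multiplicity of $0$—certifying that this eigenvalue is non-defective, which is exactly where the full-rank consequences of the invertibility of $LR$ are used.
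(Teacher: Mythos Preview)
Your argument is correct and takes a genuinely different route from the paper. The paper proves the lemma by constructing two explicit similarity transformations: a diagonal scaling $T_0$ that makes $A_1=T_0A_0T_0^{-1}$ skew-symmetric, and then a second change of coordinates $T_1=V^{-1}$ (with $V$ a carefully built $(3n+2)\times(3n+2)$ matrix whose invertibility is checked via two auxiliary matrices $S_1,S_2$) that brings the dynamics into the block-diagonal form
\[
A_2=\omega_0\,\mathrm{diag}\!\Big(0_{n\times n},\ \smallmat{0&1\\-1&0},\ \smallmat{0&N^T\\-N&0}\Big),
\]
from which the zero eigenvalues, the pair $\pm\jmath\omega_0$, and the pairs $\pm\jmath\omega_0 a_k$ are read off via a Kronecker-product identity. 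You instead square the anti-block matrix, reduce the $(n{+}1)\times(n{+}1)$ block $LR$ to $I_{n+1}+N$, apply the $XY\leftrightarrow YX$ spectral identity (together with $J_{-1}J_{n+1}^{-1}J_{-1}=J_{n+1}^{-1}$) to identify the nonzero eigenvalues of $N$ with those of $FB$, and then recover $\sigma(A_0)$ from $\sigma(A_0^2)$ using the reality of $A_0$; the semisimplicity of the zero eigenvalue follows from your rank argument $\ker A_0=\ker L\times\{0\}$.

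Your approach is shorter and avoids constructing and verifying the matrices $V,S_1,S_2$. The price is that you obtain only the spectrum and the null-space dimension, whereas the paper's explicit transformation $V$ supplies concrete invariant subspaces that are reused verbatim in the proof of Theorem~\ref{th:design} (the decomposition $\tilde x_0=\tilde x_{01}+\tilde x_{02}+\tilde x_{03}$ into the three blocks of $A_2$ is what drives the time-$T$ computation). So for the lemma in isolation your route is more economical, but if one also needs the eigenstructure for the subsequent energy-transfer argument, the paper's constructive diagonalization pays for itself.
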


\begin{proof}
First, we establish that $I+BF$ has $n$ positive eigenvalues. 
This follows from the coordinate change 
$T = \sqrt{F}$ transforming the matrix into $I+\sqrt{F} B \sqrt{F}$,
which is a symmetric positive definite matrix because
$I$ is such and 
$B = \Sigma J_{n+1}^{-1}\Sigma^T$ is positive definite.
For later use, define $\bar{B}:=\sqrt{F}B\sqrt{F}$ 
and $N$, $\bar{N}$ by the relations $N^T N=(I+\bar{B})=\bar{N}^T \Lambda^2 \bar{N}$,
with $\Lambda=\diag(a_1,\ldots,a_n)$
and $\bar{N}^{-1}=\bar{N}^T$,
that is $\bar{N}$ is orthonormal 
(such factorizations are possible since $I+\bar{B}$ is positive definite).

\newcommand\blkdiag{{\rm blkdiag}}
\newcommand\imag{{\rm Im}}

In order to highlight the eigenstructure of $A_0$, 
a sequence of coordinate transformations will be used.
Consider a first change of coordinates $A_1=T_0 A_0 T_0^{-1}$ with
\begin{equation}\label{eq:T0}
T_0 = \blkdiag\left( -\sqrt{\frac{c}{\ell}F^{-1}}, \; \sqrt{\frac{c}{\ell}J_{n+1}}, \; \sqrt{J_{n+1}} \right),
\end{equation}
which yields (recalling that $\omega_0:=(\sqrt{\ell c})^{-1}$)
\begin{subequations}
\begin{align}
A_1&:= \omega_0\mymatrix{c|c}{0_{(2n+1)\times(2n+1)} & 
  \begin{array}{c} M \\[.2cm] I_{n+1}  \end{array} \\ \hline \\[-.3cm]
  \begin{array}{cc} -M^T & - I_{n+1} \end{array} &
  0_{(n+1)\times(n+1)}
    }, \label{eq:A1}\\
    M&:=\sqrt{F}\Sigma J_{-1} \sqrt{J_{n+1}^{-1}},\label{eq:M}
\end{align}
\end{subequations}
Note that since $\imag (\Sigma) = \RR^n$ and $\ker(\Sigma)=\imag(1_{n+1})$,
where $1_{n+1}$ is the vector in $\mathbb{R}^{n+1}$ having all components equal to 1,
it follows that
$$\imag(M) = \RR^n, \; \ker(M)=\imag(\nu), \; \nu:=\sqrt{J_{n+1}}J_{-1}1_{n+1},$$
and the matrix $[\nu \; M^T]\in\RR^{n+1}$ is invertible since by well known results
$\imag(M)\oplus \ker(M) = \RR^{n+1}$. 

Since $A_1$ in (\ref{eq:A1}) is real and skew symmetric, its
eigenvalues are either zero or in imaginary conjugate pairs.
To explicitly show them and the corresponding real invariant subspaces, 
consider matrix $V$ given by
\begin{equation}\label{eq:V}
V = \mymatrix{ccccc}{
		I_n			&0		&0		 &\bar{B} 	&0\\
		-M^T	 &\nu	&0		 &M^T 		&0\\
		0			&0		&\nu	&0 			 &M^T N^T}.
\end{equation}
Invertibility of $V$ can be seen by computing $V S_1 S_2$, with
\begin{align*}
S_1 = \smallmat{      I_n & 0 & 0 	& 0	& 0\\
			0 	& 1	& 0 	& 0 & 0\\
			0 	& 0 & 0 	& 1	& 0\\
			I_n	& 0 & I_n	& 0	& 0\\
			0 	& 0 & 0		& 0 & (N^T)^{-1}},\;
S_2 = \smallmat{
			(I_n+\bar{B})^{-1} 	& 0 	& -(I_n+\bar{B})^{-1}\bar{B} & 0 & 0\\
			0 & 1	  & 0 & 0 		& 0\\
			0 & 0 & I_n 	& 0	& 0\\
			0 & 0 	  & 0 & 1		& 0\\
			0 & 0 		& 0 & 0 		& I_n},
\end{align*}
yielding
\begin{align*}
V S_1 S_2 
		&=\smallmat{
			I_n+\bar{B}	&0  &\bar{B} &0 &0\\
			0 &\nu &M^T &0 &0\\
			0 &0 &0 &\nu &M^T}S_2
		=\smallmat{
			I_n	&0  &0 &0 &0\\
			0 &\nu &M^T &0 &0\\
			0 &0 &0 &\nu &M^T}.
\end{align*}
It is then possible to consider
the additional change of coordinates $A_2=T_1 A_1 T_1^{-1}$ where
$T_1=V^{-1}$. The computation of $A_2$ is immediate by 
expressing $A_1 V$ as $V A_2$, which yields
\begin{equation}\label{eq:A2}
A_2:= \omega_0\mymatrix{c|c|c}{
	0_{n\times n} & 0 &0\\ \hline
	0 & \begin{array}{cc} 0&1\\-1 &0\end{array} &0\\ \hline
	0 &0 &\begin{array}{cc} 0_{n\times n}&N^T\\-N &0_{n\times n}\end{array}}.
\end{equation}
Due to the block diagonal structure of $A_2$, 
it is clear that it has $n$ eigenvalues equal to $0$, 
a pair of imaginary eigenvalues at $\pm\jmath \omega_0$,
and the remaining eigenvalues equal to $\omega_0$ times 
the eigenvalues of 
\begin{align*}
\mymatrix{cc}{0_{n\times n}&N^T\\-N &0_{n\times n}}
&=\mymatrix{cc}{I_n&0\\0 &\bar{N}}
\mymatrix{cc}{0_{n\times n}&\Lambda\\-\Lambda &0_{n\times n}}
\mymatrix{cc}{I_n&0\\0 &\bar{N}^{-1}}\\
&=\mymatrix{cc}{I_n&0\\0 &\bar{N}}
\left(\mymatrix{cc}{0&1\\-1 &0}\otimes\Lambda\right)
\mymatrix{cc}{I_n&0\\0 &\bar{N}^{-1}},
\end{align*}
which are equal to $\pm\jmath a_i$, $i=1,\ldots,n$
since the eigenvalues of a Kronecker product of two matrices are given
by all the possible products between an eigenvalue of the first matrix 
(in this case, $\pm\jmath$) and an eigenvalue of the second matrix
(in this case, $a_i$, $i=1,\ldots,n$).
\end{proof}

\subsection{Proof of Theorem~\ref{th:design}}
\label{sec:proof}

Consider the circuit of Figure~\ref{fig:circuit} and its state space
equations with the state $x$ given in (\ref{eq:state}).  
In the following reasoning, we will use the coordinates $\tilde{x}=\tilde{T}x$ and 
$\hat{x}=T_0x=T_1^{-1}\tilde{x}$, where $\tilde{T}=T_1T_0$.

Considering the initial state as in the statement of Problem~\ref{prob_stat},
namely $x_0=v_{\circ}[0_n^T \; 1_n^T \; 0_{n+2}^T]^T$,
our aim is to show that the corresponding free response will yield
$x(T)=v_{\circ} [0_{2n}^T \; 1 \; 0_{n+1}^T]^T$.

In the $\hat{x}$ coordinates, the initial state $x_0$ becomes
$\hat{x}_0 = c_0[0_n^T \; 1_n^T \; 0_{n+2}^T]^T$
with $c_0=v_{\circ}\sqrt{\frac{c}{\ell}}$.
The corresponding expression of $\tilde{x}_0$ can be found
by the relation $\hat{x}_0=T_1^{-1}\tilde{x}_0$ with
$T_1^{-1}=V$ given by (\ref{eq:V}); partitioning 
$\tilde{x}_0$ according to the block columns of $V$,
and choosing 
$\delta_0:=(I_n+\bar{B})^{-1}\sqrt{F^{-1}}\delta$,
$\delta:=[1 \; 2 \; \ldots \; n]^T$, 
it follows that 
$
\tilde{x}_0
=\frac{c_0}{2}\smallmat{-\bar{B}\delta_0 \\ 1 \\ 
	0 \\  \delta_0 \\ 0}.
$ 
In fact, it is possible to verify that
\begin{align*}
V \tilde{x}_0 
	&=\mymatrix{ccccc}{
			I_n			&0		&0		 &\bar{B} 	&0\\
			-M^T	 &\nu	&0		 &M^T 		&0\\
			0			&0		&\nu	&0 			 &M^T N^T}
			\frac{c_0}{2}\smallmat{-\bar{B}\delta_0 \\ 1 \\ 0 \\  \delta_0 \\ 0} \\
	&=\frac{c_0}{2}\mymatrix{c}{0\\ M^T\sqrt{F^{-1}}\delta +\nu	\\ 0}=\mymatrix{c}{1_n\\ 0}=\hat{x}_0,
\end{align*}
by using the following relation:
\begin{align*}
&	M^T\sqrt{F^{-1}}\delta +\nu 
	=J_{-1}\sqrt{J_{n+1}^{-1}}\left(\Sigma^T \delta + J_{n+1} 1_{n+1}\right)\\
& \qquad \qquad =J_{-1}\sqrt{J_{n+1}^{-1}}\left(\mymatrix{c}{1_n\\ -n} + \mymatrix{c}{1_n\\ n}\right)
	=2\mymatrix{c}{1_n\\ 0}.
\end{align*}
Decompose now $\tilde{x}_0$ as $\tilde{x}_0=\tilde{x}_{01}+\tilde{x}_{02}+\tilde{x}_{03}$
with 
$$
\tilde{x}_{01} = \frac{c_0}{2}\mymatrix{c}{-\bar{B}\delta_0 \\ 0_{2n+2}}, 
\tilde{x}_{02} = \frac{c_0}{2}\smallmat{0_{n} \\ 1 \\ 0 \\ 0_{n+1}}, 
\tilde{x}_{03} = \frac{c_0}{2}\smallmat{0_{n+2} \\  \delta_0 \\ 0}, 
$$
and consider that, according to the structure in (\ref{eq:A2}),
$\tilde{x}_{01}$ only excites constant modes, 
$\tilde{x}_{02}$ only excites modes at frequency $\omega_0$
(which have a phase change between $t=0$ and $t=T$ of exactly
$\omega_0 T=\pi$), 
and $\tilde{x}_{03}$ only excites modes at frequency $\alpha_i\omega_0$,
$i=1,\ldots,n$ with $\alpha_i$ even
(which have a phase change between $t=0$ and $t=T$ of exactly
$\alpha_i\omega_0 T=2h\pi$, with $h\in\NN$).
It follows that 
$$
\tilde{x}(T) = \tilde{x}_{01} +\tilde{x}_{03} - \tilde{x}_{02}, 
$$
and then 
\begin{align*}
&\hat{x}(T) = V(\tilde{x}_{01} +\tilde{x}_{03} - \tilde{x}_{02}) 
= \frac{c_0}{2}\mymatrix{c}{0_n\\ M^T\sqrt{F^{-1}}\delta -\nu	\\ 0_{n+1}},\\
&\frac{c_0}{2}(M^T\sqrt{F^{-1}}\delta -\nu)
= \frac{c_0}{2}J_{-1}\sqrt{J_{n+1}^{-1}}\left(\mymatrix{c}{1_n\\ -n} - \mymatrix{c}{1_n\\ n}\right)\\
&\qquad=c_0\mymatrix{c}{0_n\\ \sqrt{n}}, 
\end{align*}
and finally, computing $x(T)=T_0^{-1}\hat{x}(T)$, 
the desired result $x(T)=v_{\circ} [0_{2n}^T \; 1 \; 0_{n+1}^T]^T$ is obtained,
which corresponds to having $v_{n+1}(T) = v_{\circ}$
and all other voltages and currents at zero,
which in turn implies $v_L(T) = nv_{n+1}(T) =n v_{\circ}$, as to be proven.

}

\section{Conclusion and perspectives}
\label{sec:conclusions}


We proved that
the design of an $n$-stage Marx generator electrical network 
exhibiting a desirable energy transfer
boils down to a structured pole assignment. This
 can be in turn formulated as a structured system of $n$ polynomial
equations in $n$ unknowns with rational coefficients. 
We have then illustrated a symbolic and a numerical approach to the
computation of its solutions. By extrapolating from the analyzed cases,
we conjecture that there is a finite number
of complex, hence real solutions to this polynomial system. We also conjecture that
there is at least one real solution. This motivates our
Assumptions~\ref{as:finitesol} and~\ref{as:onesol}.
The degrees of the polynomials are equal to
$1, 2, \ldots, n$ so that the B\'ezout bound on the number of complex
solutions, as well as the mixed volume of the support polytopes of
the polynomials \cite[Section 3]{stu02}, both equal $n!$. The number of computed real solutions
is however much less than this upper bound. It would be insightful to study
the applicability of existing
upper and lower bounds on the number of real solutions of systems of
polynomial equations, see \cite{Sot11} for a recent survey.

We solve the polynomial system of equations first with a symbolical method,
namely Gr\"obner bases and rational univariate representations. 
All real solutions are then obtained from the
real roots of a univariate polynomial of degree $n!$. This univariate
polynomial can be computed exactly (i.e. with rational coefficients)
and its real roots are isolated in rational intervals at any given
relative accuracy. Using state-of-the-art implementation of Gr\"obner
basis algorithms, we could solve the equations up to $n=6$ stages routinely
on a standard computer.  Then we solved the same system of polynomial equations
with a numerical method,
using Lasserre's hierarchy of convex LMI relaxations for polynomial optimization.
The advantage of this approach is that it is not necessary to represent
or enumerate all $n!$ complex solutions, and a particular real solution
optimal with respect to a given polynomial objective function can be found quite easily
up to $n=8$. The accuracy of the computed solutions may not be very good,
but this can be refined locally afterwards using, e.g. Newton's method.
Future work may involve the use of alternative solution methods
such as, e.g., 
homotopy or continuation
algorithms \cite{PHCpack,PHoM,Bates08}. 

\section*{Acknowledgments}        
We are grateful to Mohab Safey El Din for technical input
on solving polynomial system of equations.
We also thank Chaouki T. Abdallah and Edl Schamiloglu for helpful
discussions.
Finally, we thank M. Francaviglia and A. Virz\`i for their preliminary
work on this subject.

\section *{Appendix}
\label{sec:maple}

In this appendix, we comment on some sample Maple and Matlab implementations of
the symbolic and numerical algorithms described in Sections~\ref{sec:symbolic}
and~\ref{sec:numerical}, respectively.

\subsection*{Symbolic algorithm}

To solve the polynomial system of equations (\ref{eq:systemp})
we use special computing packages which can be called
directly from inside a computing sheet of the Maple computation software.

The first step computes a Gr\"obner basis in grevlex ordering.
This is achieved by means of the
{\tt fgb\_gbasis} procedure of the {\tt FGB} package,
see \cite{Fau1999,Fau2002}.
This implementation of Gr\"obner basis computation
is considered to be one of the most efficient available.
To speed up linear algebra computations,
the algorithm works in an integer ring with prime characteristic,
for a sufficient large prime number found iteratively.
Then, for finding the real solutions, a RUR is computed from the Gr\"obner basis,
see \cite{rou99}.
Finally,
for isolating the real roots of the univariate polynomial,
the procedure {\tt rs\_isolate\_gb} in the {\tt fgbrs} package is used.
It returns small rational intervals (as small as one wants) within
which the roots are guaranteed to be found.
See~\cite{RZ} for further references on the method used.


The whole process lies in a few lines of Maple code.
We first generate the polynomial system (corresponding to~\eqref{eq:systemp}),
denoted by {\tt p} in the following Maple sheet:

{\small
\begin{verbatim}
with(LinearAlgebra):with(PolynomialTools):
n:=4:B:=Matrix(n):
for i from 1 to n-1 do
 B(i,i):=2: B(i,i+1):=-1: B(i+1,i):=-1:
end do:
B(n,n):=(n+1)/n;
F:=Matrix(n,Vector(n,symbol=f),shape=diagonal):
d:=product(x-((2*j)^2-1),j=1..n):
p:=CoefficientList(collect(
   charpoly(B.F,x)-d,x),x);
\end{verbatim}
}

We then use the {\tt FGb} package to transform the system~{\tt p}  
in a new algebraic system referenced with the name {\tt GB}:

{\small
\begin{verbatim}
with(FGb):
fv:=[seq(f[i],i=1..n)]:
GB:=fgb_gbasis(p,0,fv,[]):
\end{verbatim}
}
Finally, the solutions are computed as follows:
{\small
\begin{verbatim}
with(fgbrs):
rs_isolate_gb(GB,fv);
\end{verbatim}
}

\begin{figure}[ht!]
\begin{center}
\includegraphics[width=0.8\columnwidth]{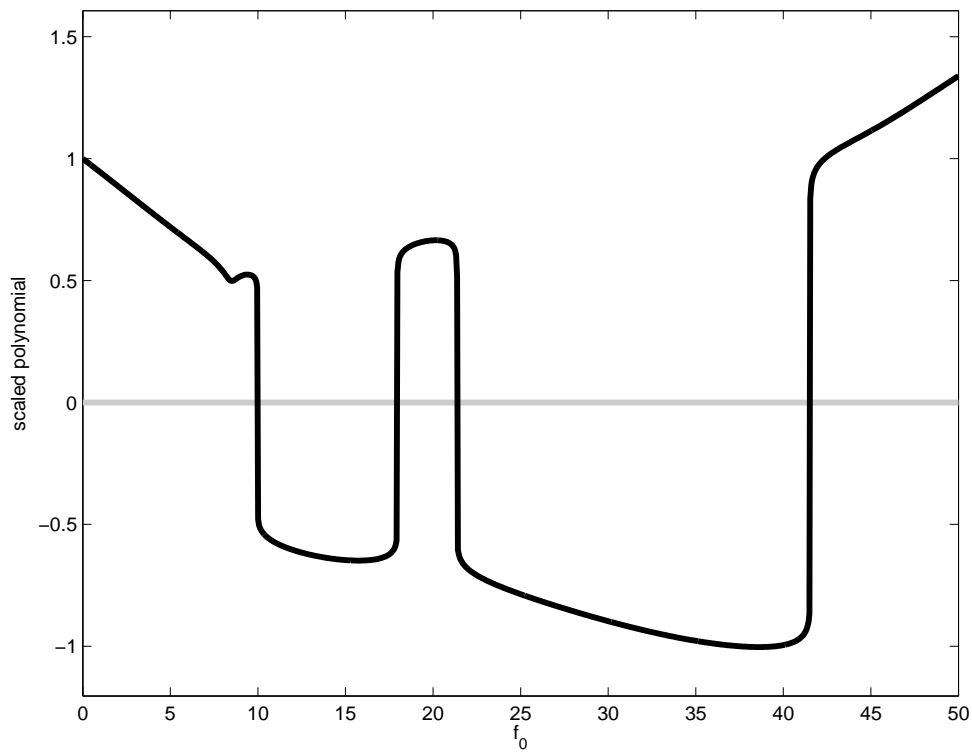}
\end{center}
\caption{Case $n=4$: rescaled graph of the univariate polynomial
whose 4 real roots parametrize the 4 real solutions of
the polynomial equations (\ref{eq:systemp}).}
\label{fig:racines}
\end{figure}

As an example, in the case $n=4$
the univariate polynomial $r(f_{\circ})$ that enters the RUR (\ref{rur})
has degree 24, and it parametrizes all the $4!=24$ (complex)
solutions.
Out of 24, only 4 solutions are real.
 This polynomial, or rather its scaled version
\[
f_{\circ} \mapsto \mathrm{sign}(r(f_{\circ})) \,|r(f_{\circ})|^\frac{1}{24},
\]
which allows to more easily visualize its zero crossings,
is represented in Figure~\ref{fig:racines}.
It is computed as follows
{\small
\begin{verbatim}
with(Groebner):
R:=RationalUnivariateRepresentation(GB,x):
r:=R[1];
\end{verbatim}
}

In practice, the computation of a Gr\"obner basis becomes
very hard as $n$, the number of variables and equations, increases.
For the specific system of equations
(\ref{eq:systemp})
associated to the formulation (\ref{eq:pol_assign}) of the Marx generator design, 
we observe that for $2\le n \le 6$ the univariate polynomial
$r(f_{\circ})$ of the RUR has degree $n!$.
Moreover, the size of the (integer) coefficients of this polynomial
are very large. If one compares the degree of the polynomial,
720 in the case $n=6$, and the number of real roots, 12 in this case,
it is clear that the computation complexity is due to the very large
number of complex roots, which are of no interest for our engineering problem.


\subsection*{Numerical algorithm for $n=3$}

First we generate the polynomials $q_i(k)$, $i=1,\ldots,n$
corresponding to (\ref{eq:systemq}) and
defining the feasibility set $\mathscr K$ in problem (\ref{polyopt}). We use
the following Maple code, where we select $\alpha_i =2i$, $i=1,\ldots,
n$ just like in Section~\ref{sec:alg_sens}:

{\small
\begin{verbatim}
with(LinearAlgebra):with(PolynomialTools):
n:=3:B:=Matrix(n):for i from 1 to n-1 do
 B(i,i):=2: B(i,i+1):=-1: B(i+1,i):=-1:
end do: B(n,n):=(n+1)/n;
K:=Matrix(n,Vector(n,symbol=k),
          shape=diagonal):
p:=product(x-1/((2*j)^2-1),j=1..n):
q:=CoefficientList(collect(
    charpoly(MatrixInverse(B).F,x)-p,x),x);
\end{verbatim}
}

For $n=3$ this code generates the following polynomials
\[
\begin{array}{rcl}
q_1(k) & = & -\frac{3}{7}+\frac{5}{6}k_1+\frac{4}{3}k_2+\frac{3}{2}k_3 \\
q_2(k) & = & -\frac{53}{1575}+\frac{2}{3}k_1k_2+k_1k_3+k_2k_3 \\
q_3(k) & = & -\frac{1}{1575}+\frac{1}{2}k_1k_2k_3.
\end{array}
\]
These polynomials are then converted into Matlab format,
and we use the following GloptiPoly code for inputing problem (\ref{polyopt})
and solving the smallest possible LMI relaxation, i.e. $d=
\lceil\frac{3}{2}\rceil =2$ in problem
(\ref{lmirelax})
(note also the inequality constraints in {\tt P} corresponding to the
convexity requirement for the ``regular'' solution):

{\small
\begin{verbatim}
mpol k 3
K = [-3/7+5/6*k(1)+4/3*k(2)+3/2*k(3)
     -53/1575+2/3*k(1)*k(2)+k(1)*k(3)+k(2)*k(3)
     -1/1575+1/2*k(1)*k(2)*k(3)];
obj = 0;
for i = 1:length(k)
 for j = 1:length(k)
  obj = obj+(k(i)-k(j))^2;
 end
end
P = msdp(min(obj),K==0,k(1)-2*k(2)+k(3)>=0);
[stat,obj] = msol(P);
double(k)
\end{verbatim}
}

As pointed out in Remark~\ref{rem:relax}, already with the smallest
LMI relaxation $d=2$, 
we obtain a certificate of global optimality,
and a unique global minimizer (truncated at 5 digits):
$(k_1,\ k_2,\ k_3) = (9.3786\cdot10^{-2}, \; 8.6296\cdot10^{-2},\;
1.5690\cdot10^{-1})$ that corresponds to the second solution in the
second block of Table~\ref{tab:allnumbers} (the ``regular'' one as expected).
%

\bibliographystyle{plain}
\bibliography{refs}

\end{document}